\theoremstyle{plain}
\newtheorem{theorem}{Theorem}[section]
\newtheorem{lemma}[theorem]{Lemma}
\newtheorem{proposition}[theorem]{Proposition}
\theoremstyle{definition}
\newtheorem*{custom}{\customname}
\newcommand{\customname}{Theorem}
\newcommand {\Prob} {\ensuremath{\mathbb{P}}}
\newcommand {\R} {\ensuremath{\mathbb{R}}}
\newcommand {\ZZ} {\ensuremath{\mathbb{Z}}}
\newcommand {\N} {\ensuremath{\mathbb{N}}}
\newcommand{\process}[1]{\{#1(t)\}_{t\geq0}}
\newcommand {\D} {{\rm d}}
\newcommand {\V} {\ensuremath{\mathcal{V}}}
\newcommand {\W} {\ensuremath{\mathcal{W}}}
\newcommand {\Ind} {\ensuremath{\mathbb{1}}}
\newcommand{\df}{\coloneqq}
\newcommand{\E}{\mathrm{e}}
\numberwithin{equation}{section}
\title{Stability of storage processes with general release rates}
\author[M.\ Bre\v{s}ar]{Miha Bre\v{s}ar}
\address[Miha Bre\v{s}ar]{School of Data Science\\The Chinese University of Hong Kong\\ Shenzhen\\China}
\email{mihabresar@cuhk.edu.cn}
\author[A.\ Mijatovi\'c]{Aleksandar Mijatovi\'c}
\address[Aleksandar Mijatovi\'c]{Department of Statistics\\University of Warwick\\ Coventry\\UK}
\email{a.mijatovic@warwick.ac.uk}
\author[N.\ Sandri\'{c}]{Nikola Sandri\'{c}}
\address[Nikola\ Sandri\'{c}]{Department of Mathematics\\University of Zagreb\\ Zagreb\\Croatia}
\email{nikola.sandric@math.hr}
\subjclass[2020]{60K25 (Primary) 
60J25, 60J76 (Secondary)}
\keywords{Storage process, stationary law,  convergence rates, total variation, Wasserstein distance}
\begin{document}
\allowdisplaybreaks[4]

\begin{abstract}
This paper quantifies the ergodicity and the rate of decay of the tail of the stationary distribution for a broad class of storage models, encompassing constant, linear, and power-type release rates with both finite and infinite activity input process. Our results are expressed in terms of the asymptotics of the release rate, the tail-decay rate  of the L\'evy measure of the input process and its (possibly infinite) first moment. Our framework unifies and significantly extends classical results on the stability of storage models. Under certain regularity assumptions, 
we also provide upper bounds on the stability in the Wasserstein distance.
 \end{abstract}

\maketitle

\section{Introduction}\label{s1}

Storage processes are classical and well-studied models in applied probability. Such processes are typically defined by an input process given by a subordinator (i.e., a L\'evy process with non-decreasing trajectories) and a release rate function. Despite their rich history see, for instance,~\cite{Brockwell-1977,Brockwell-Resnick-Tweedie-1982,Meyn-Tweedie-AdvAP-III-1993,Tuominen-Tweedie-1979,Yamazato-2005,MR1978607}, and deep connections to modern literature~\cite{MR4781632,MR3595769,MR3975897,MR4370826,MR4331112,MR4940338,MR3379923,MR4243160,MR4947855}, quantitative results on their ergodic behaviour remain scarce and are largely confined to specific or simplified cases (e.g., exponential ergodicity for compound Poisson input process~\cite{Meyn-Tweedie-AdvAP-III-1993}).

The primary obstacle lies in the inherent complexity of these models: the non-local (jump) nature of the subordinator, combined with the potential space-inhomogeneity of the release rate, renders classical analytical techniques inadequate, particularly in the case when release rate  is non-affine (cf.~\cite{MR4370826}). In this paper, we overcome these challenges by employing modern probabilistic methods~\cite{Bresar-Mijatovic-2024} to \emph{quantify} the stability of general storage models. Our results significantly extend the classical theory~\cite{Brockwell-1977,Meyn-Tweedie-AdvAP-III-1993,Yamazato-2005} both in terms of the strength of the convergence guarantees and the breadth of models they encompass.
Perhaps surprisingly, despite their apparent simplicity, storage models exhibit a remarkably rich and diverse range of phenomena. As illustrated in Table~\ref{tab:main} below, our results show that  these models exhibit behaviours ranging from polynomial to uniform ergodicity. Crucially, the asymptotic properties of storage models can be deduced directly from the model parameters.

The storage process $\{X(t)\}_{t\ge0}$ studied in this paper takes the following form:
\begin{equation} \label{eq1_intro} X(t)=x+A(t)-\int_0^tr(X(s))\D s. \end{equation} 
 Here $x\in[0,\infty)$ denotes the initial content of the storage system, $\{A(t)\}_{t\ge0}$ is a non-decreasing  pure-jump L\'evy process modelling the cumulative input, and $u\mapsto r(u)$ (for $u\in[0,\infty)$) is the positive release rate when the content satisfies $u>0$.    The main results concerning convergence in total variation distance are summarized in Table~\ref{tab:main} below, with proofs provided in Section~\ref{s2}. Before presenting these results, we collect some notation that will be used in the remainder of the paper.  Recall that the Laplace transform of $\{A(t)\}_{t\ge0}$ is given by $$\mathbb{E}\left[\E^{-\lambda A(t)}\right]=\exp\left\{t\int_{[0,\infty)}(\E^{-\lambda u}-1)\upnu(\D u)\right\},\qquad \lambda\ge0,$$ where $\upnu(\D u)$ is the corresponding L\'evy measure satisfying $\int_{(0,\infty)} (u\wedge 1)\upnu(\D u) <\infty.$
For functions $f,g:(0,\infty)\to(0,\infty)$, we write $f(t)\preccurlyeq g(t)$ if $\limsup_{t\to\infty}f(t)/g(t)<\infty.$ Moreover, for $\alpha \in \R\setminus \{0\}$, we write  $f(t)\approx t^\alpha$ if for every $\epsilon \in (0,|\alpha|)$, we have
$$t^{\alpha-\epsilon}\preccurlyeq f(t) \preccurlyeq t^{\alpha+\epsilon}.$$
For a $\sigma$-finite measure $\upmu$ on the Borel $\sigma$-algebra $\mathfrak{B}((0,\infty))$, finite on the complement of any neighbourhood of zero, define its  first moment $m_\upmu$ and  tail function $\bar\upmu$ are defined as follows 
\begin{equation}
\label{eq:nu_first_moment}
m_\upmu\df\int_{(0,\infty)}u\upmu(\D u)\in[0,\infty]\quad\text{and}\quad \bar\upmu(u)\df\upmu((u,\infty)),\quad u>0.
\end{equation}
The storage process in~\eqref{eq1_intro} exhibits a wide range of behaviours. We begin by classifying transience and null/positive-recurrent cases of $\{X(t)\}_{t\ge0}$ (see Theorem~\ref{prop:transience} below for more details).

\begin{table}[!h]
\label{tab:1}
\centering
\small
\setlength{\tabcolsep}{5pt}
\renewcommand{\arraystretch}{1.5}
\begin{tabularx}{\textwidth}{|c|X|X|}
\hline
\textbf{Regime} & \textbf{Conditions} & \textbf{Notes/Examples} \\
\hline
\makecell{Transient} & 
\makecell[l]{(a) $m_\upnu < \infty$, $\limsup r(u) < m_\upnu$ \\[0.5ex] 
(b) $m_\upnu = \infty$, $\liminf \frac{u}{r(u)}\int_0^\infty \frac{\bar\upnu(uv)}{1+v^2}\,dv > 1$} & 
\makecell[l]{Ex: $\bar\upnu(u) \approx u^{-\alpha}$, $r(u) \approx u^\beta$: \\ 
transience if $\alpha + \beta < 1$} \\
\hline
Null recurrent & $m_\upnu < \infty$, $r(u) = m_\upnu$ for $u \geq u_0$ & Mean zero increments when $X(t)$  is large. \\
\hline
Pos. recurrent & $\limsup_{u\to\infty} \int_0^\infty \frac{\bar\upnu(v)}{r(u+v)}\,dv < 1$ & Ex: $\alpha + \beta > 1$, $\alpha,\beta \in (0,1)$ \\
\hline
\end{tabularx}
\caption{Asymptotic regimes based on the input ($\bar\upnu$) and release ($r$) functions.}
\end{table}
Our main results, stated in Theorems~\ref{tm:TV} and~\ref{prop:upper_bounds_super_linear} below, concern the quantification of the asymptotic behaviour in the positive recurrent regime. In this case there exists the invariant distribution $\uppi$ on 
$\mathfrak{B}([0,\infty))$
 of the process $\{X(t)\}_{t\ge0}$  (see~\eqref{eq:invariant} below for details).
 Table~\ref{tab:main} summarises the findings in Theorems~\ref{tm:TV} and~\ref{prop:upper_bounds_super_linear} for important special cases
 of constant, linear, and power-type release rates, highlighting the range of phenomena exhibited by the storage models.
\begin{table}[h]
\centering
\small
\setlength{\tabcolsep}{3pt}
\renewcommand{\arraystretch}{1.1}
\begin{tabularx}{\textwidth}{|X|X|c|c|}
\hline
\textbf{Release rate $r(u)$ as $u\uparrow\infty$} & \textbf{Input Rate $\bar\upnu(u)$ as $u\uparrow\infty$} & \textbf{Conv. Rate in TV as $t\uparrow\infty$} & \textbf{$\bar\uppi(u)$ as $u\uparrow \infty$} \\
\hline
$r(u)=a$ (large $u$) & $\bar \upnu(u)\preccurlyeq\exp(-cv)$ & Exponential & Exponential \\
\cline{2-4}
$m_\upnu < a$ & $\bar \upnu(u) \approx v^{-\alpha}$, $\alpha>1$ & $\approx t^{1-\alpha}$ & $\approx u^{1-\alpha}$ \\
\hline
$r(u)\approx u$ & $\bar \upnu(u)\approx v^{-\alpha}$, $\alpha>0$ & Exponential & $\approx u^{-\alpha}$ \\
\cline{2-4}
& $\bar \upnu(u)\approx (\ln v)^{-\alpha}$, $\alpha>1$ & $\approx t^{1-\alpha}$ &  $\approx (\ln u )^{1-\alpha}$\\
\hline
$r(u)\approx u^\beta$, $\beta\in(0,1)$ & $\bar\upnu(u)\preccurlyeq \exp(-u^{1-\beta}) $ & Exponential & $\preccurlyeq \exp(-u^{1-\beta})$ \\
\cline{2-4}
& $\bar \upnu(u)\approx u^{-\alpha}$, $\alpha\!+\!\beta\!>\!1$ & $\approx t^{\frac{1-\alpha-\beta}{1-\beta}}$ & $\approx u^{1-\alpha-\beta}$ \\
\hline
$r(u)\approx u^\beta$, $\beta\in(1,\infty)$ & $\bar \upnu(u)\preccurlyeq u^{-\alpha}$ $\alpha>0$ & Uniform & $\preccurlyeq u^{1-\alpha-\beta}$ \\
\hline
\end{tabularx}
\caption{A summary of ergodic properties for positive recurrent storage models. Here $\uppi$ denotes the invariant measure for $\{X(t)\}_{t\ge0}$  
with $\bar\uppi(u)$ the corresponding tail function. The ``Convergence Rate'' column quantifies the rate of decay of total variation $\|\mathbb{P}^x(X(t)\in\cdot) - \uppi\|_{\mathrm{TV}}$, as $t\to\infty$. Note that the value of the first moment $m_\upnu$ of the L\'evy measure  is only relevant for the asymptotic behaviour of the storage model $\{X(t)\}_{t\ge0}$   in the case where $r(u)$ is asymptotically bounded.}
\label{tab:main}
\end{table}

A short \href{https://youtu.be/TnvkYk2GxnM?si=1YFwslWk06w-opzD}{YouTube video} discussing the results in Table~\ref{tab:main} and the ideas behind their proofs is available~\cite{YouTube_talk}.
To the best of our knowledge, Table~\ref{tab:main} provides first results on the subgeometric ergodicity for storage processes.
Related results appear in classical literature, e.g., ~\cite[Lemma 5.2]{Brockwell-1977},~\cite[Proposition 11]{Brockwell-Resnick-Tweedie-1982},~\cite[Sections 7 and 8]{Harrison-Resnick-1976},~\cite[Theorem 9.1]{Meyn-Tweedie-AdvAP-III-1993},~\cite[Theorem 3]{Tuominen-Tweedie-1979}, \cite{Yamazato-2000}, and~\cite{Yamazato-2005}, where ergodicity conditions (without explicit convergence rates) were studied under the finite-activity assumption of the input process (i.e., $\bar\upnu(0) < \infty$).

In modern literature, storage models are often analysed in the special case of shot-noise processes (i.e., the case of a linear release rate) see, e.g.,~\cite[Sec.~3]{MR4331112} and the references therein. A more general class of such models, given by affine storage processes with two-sided jumps, which  encompasses shot-noise processes as a special case (e.g., when the constant premium rate is zero), is also widely studied~\cite{MR4370826}.  These models are particularly important in queueing theory~\cite{MR4951813} and include cases with both infinite- ($\bar\upnu(0) = \infty$)~\cite{MR3975897} and finite-activity subordinators ($\bar\upnu(0) < \infty$)~\cite{MR4331112}. By including general sublinear/superlinear release rates that generalise the affine cases, such as those studied in~\cite{MR4370826}, we encounter both polynomial and uniform convergence rates, significantly extending the known range of phenomena. Furthermore, our models include finite and infinite activity subordinators; however, the results demonstrate that this distinction crucially does not play a role in asymptotic behaviour, which is determined by the tail of the L\'evy measure and its first moment. We expect that the methods applied in the present paper would also characterise the stability of the general release rate storage processes with two-sided jumps. The details are left for future research. 

A condition for exponential ergodicity of $\{X(t)\}_{t\ge0}$ was established in~\cite[Theorem 9.1]{Meyn-Tweedie-AdvAP-III-1993}, again under the assumption $\bar\upnu(0) < \infty$. Our results in Section~\ref{ss3} extend known results on ergodicity, by allowing both finite and infinite activity inputs, establishing the actual convergence rates in subexponential cases (not only upper bounds on convergence rates) in terms of the model parameters.

Finally we note that the ergodicity of general Markov processes   in the total variation distance involve
understanding the irreducibility and aperiodicity of the process (see Section~\ref{s22} for details). When the process is not sufficiently regular (i.e., either reducible or periodic), the topology induced by the total variation distance becomes too ``fine'', precluding convergence.  Thus,  the convergence towards an invariant probability measure  (if it exists) may fail in the total variation  metric and can only be established in a weaker sense. In such cases, it is natural to resort to  Wasserstein distances, which, in a certain sense, induce a ``rougher'' topology. 
In Theorem~\ref{eqWASS1} below we give general conditions for the convergence of $\{X(t)\}_{t\ge0}$ in the Wasserstein metric to its stationary measure, which produces both exponential and polynomial upper bounds. The lower bounds in the polynomial case are left as an open problem.

The remainder of the article is organized as follows. 
Section~\ref{s22} is devoted to structural properties of storage processes, 
as well as to a review of definitions and general results on the ergodic theory 
of Markov processes with respect to the total variation and Wasserstein distances. 
In Section~\ref{s22}, we also formulate general versions of our main theorems, 
from which the results in Tables~\ref{tab:1} and~\ref{tab:main} are corollaries. We conclude Section~\ref{s22}
with a review of related literature. 
Finally, Section~\ref{s2} contains the proofs of the main results.

\section{General results}\label{s22}
In this section, we formulate and prove the general versions of the main results, which in turn yield the results of the previous section. We begin by describing some structural properties of storage processes.

\subsection{Structural properties of the model}\label{ss1}  Let $\{A(t)\}_{t\ge0}$
be defined on a stochastic basis $(\Omega,\mathcal{F},$ $\{\mathcal{F}_t\}_{t\ge0},\mathbb{P})$ satisfying the usual conditions. Assume that  the release rate $r(u)$ satisfies the following conditions:
\begin{description}
	\item[(C1)] $r:[0,\infty)\to[0,\infty)$ is strictly positive  if, and only if, $u>0$.
	\item[(C2)] For any $\rho>0$, there exists $\Gamma_\rho>0$ such that for all $0\le u, v\le \rho$,  $$|r(u)-r(v)|\le \Gamma_\rho|u-v|.$$
\end{description}

\noindent Under $\textbf{(C1)}$, it has been shown in   \cite[Propositions 1-3]{Brockwell-Resnick-Tweedie-1982}  that for any $x\ge0$ the equation 
\begin{equation} \label{eq1} X(x,t)=x+A(t)-\int_0^tr(X(x,s))\D s. \end{equation}
admits a  strong nonexplosive  solution $\{X(x,t)\}_{t\ge0}$ which is a nonnegative time-homogeneous strong Markov process (with respect to $(\Omega,\mathcal{F},\{\mathcal{F}_t\}_{t\ge0},\mathbb{P})$) with c\`{a}dl\`{a}g sample paths and  transition kernel $p(t,x,\D y):=\Prob(X(x,t)\in \D y)$, $t\ge0$, $x\ge0$. If, in addition,  $\textbf{(C2)}$ holds, then $\{X(x,t)\}_{t\ge0}$ is the unique solution to \eqref{eq1}  (see \cite[Theorem 3.1]{Albeverio-Brzezniak-Wu-2010}). In the context of Markov processes, it is natural for the underlying probability measure to depend on the initial position of the process. Using standard arguments (Kolmogorov extension theorem), it is well known that for each $x\ge0$, the transition kernel defined above defines  a unique probability measure $\mathbb{P}^x$ on the canonical (sample-path)  space such that the projection process, denoted by $\process{X}$, is a strong Markov process (with respect to the completion of the corresponding natural filtration). The process $\process{X}$ has c\`{a}dl\`{a}g   sample paths
and  the same finite-dimensional distributions (with respect to $\mathbb{P}^x$) as $\{X(x,t)\}_{t\ge0}$ (with respect to $\mathbb{P}$). Since we are interested only in distributional properties of the solution to   \eqref{eq1}, we will henceforth deal with $\process{X}$ rather than  $\{X(x,t)\}_{t\ge0}$. 
 Further, recall that the Laplace transform of $\{A(t)\}_{t\ge0}$ is given by $$\mathbb{E}\left[\E^{-\lambda A(t)}\right]=\exp\left\{t\int_{[0,\infty)}(\E^{-\lambda u}-1)\upnu(\D u)\right\},\qquad \lambda\ge0,$$ where $\upnu$ is the corresponding L\'evy measure which satisfies $\int_{[0,\infty)} (u\wedge 1)\upnu(\D u) <\infty.$  According to \cite{Albeverio-Brzezniak-Wu-2010} and \cite{Masuda-2007},
for any
$f\in \mathcal{C}^1([0,\infty))$ such that $u\mapsto\int_{(1,\infty)}f(u+v)\upnu(\D v)$ is locally bounded, the following holds:
$$f(X(t))-f(X(0))-\int_0^t\mathcal{L}f(X(0))\D s,\qquad t\geq0,$$ is a $\mathbb{P}^x$-local martingale for every $x\ge0$, where $$\mathcal{L}f(u):=- r(u)f'(u)+\int_{[0,\infty)}\left(f(u+v)-f(u)\right)\upnu(\D  v).$$  
We remark that when $\upnu((0,\infty))<\infty$, i.e., $\{A(t)\}_{t\ge0}$ is a compound Poisson process, it has been shown in \cite{Harrison-Resnick-1976}  that the above properties hold if we replace $\textbf{(C1)}-\textbf{(C2)}$ with the following conditions:
\begin{description}
	\item[(\={C}1)] $r:[0,\infty)\to[0,\infty)$ is   left continuous and has a strictly positive right limit at every point in $(0,\infty)$ with  $r(u)=0$ if, and only if, $u=0$.
	\item[(\={C}2)] $\displaystyle\int_0^u(r(v))^{-1}\D v<\infty$ for all $u>0$.
\end{description}

\subsection{Ergodicity of Markov processes} \label{ss2}

We now recall some definitions and general results from the ergodic theory of Markov processes. Our main references are \cite{Meyn-Tweedie-AdvAP-II-1993} and \cite{Tweedie-1994}.
Let $(\Omega,\mathcal{F}, \process{\mathcal{F}}, \process{\theta},\process{M},\{\Prob^x\}_{x\in S})$, denoted by  $\process{M}$ in the sequel, be a Markov process with c\`adl\`ag sample paths and  state space
$(S,\mathfrak{B}(S))$, where $S$ is a locally compact and separable metric space, and $\mathfrak{B}(S)$ is the corresponding Borel $\sigma$-algebra (see \cite{Blumenthal-Getoor-Book-1968}).  We let $p(t,x,\D y):=\Prob^x(M_t\in \D y)$, $t\ge0$, $x\in S$,  denote the corresponding transition kernel.  
Also,  assume that $p(t,x,\D y)$
is a probability measure, i.e., $\process{M}$ does not admit a cemetery point
in the sense of \cite{Blumenthal-Getoor-Book-1968}. Observe that this is not a restriction since,
as we have already commented, $\process{X}$ is nonexplosive.
The process $\process{M}$ is called
\begin{enumerate}
	\item [(i)]
	$\upphi$-irreducible if there exists a $\sigma$-finite measure $\upphi$ on
	$\mathfrak{B}(S)$ such that whenever $\upphi(B)>0$, we have
	$\int_0^{\infty}p(t,x,B)\D t>0$ for all $x\in S$.\end{enumerate}
Let us remark that if $\{M_t\}_{t\ge0}$ is  $\upphi$-irreducible, then the irreducibility measure $\upphi$ can be
maximized. This means that there exists a unique ``maximal'' irreducibility
measure $\uppsi$ such that for any measure $\bar{\upphi}$,
$\{M_t\}_{t\ge0}$ is $\bar{\upphi}$-irreducible if, and only if,
$\bar{\upphi}$ is absolutely continuous with respect to $\uppsi$ (see \cite[Theorem~2.1]{Tweedie-1994}).
In view to this, when we refer to an irreducibility
measure we actually refer to the maximal irreducibility measure. The process $\process{M}$ is called
\begin{enumerate}
\item [(ii)]
transient if it is $\uppsi$-irreducible, and if there exists a countable
covering of $S$ with sets
$\{B_j\}_{j\in\N}\subseteq\mathfrak{B}(S)$, and for each
$j\in\N$ there exists a finite constant $c_j\ge0$ such that
$\int_0^{\infty}p(t,x,B_j)\D{t}\le c_j$ holds for all $x\in S$.
\item [(iii)]
recurrent if it is $\uppsi$-irreducible, and $\uppsi(B)>0$ implies
$\int_{0}^{\infty}p(t,x,B)\D{t}=\infty$ for all $x\in S$.
\end{enumerate}
It is well known that every $\uppsi$-irreducible Markov
process is either transient or recurrent (see \cite[Theorem
2.3]{Tweedie-1994}).

A
(not necessarily finite) measure $\uppi$ on $\mathfrak{B}(S)$ is called invariant for
$\process{M}$ if
\begin{equation}
\label{eq:invariant}
\int_{S}p(t,x,\D y)\uppi(\D x)=\uppi(\D y),\qquad t\ge0.
\end{equation}
It is well known that if $\process{M}$ is
recurrent, then it possesses a unique (up to constant
multiples) invariant measure 
(see \cite[Theorem~2.6]{Tweedie-1994}).
If the 
invariant measure is
finite, then it may be normalized to a probability measure. If
$\process{M}$ is recurrent with finite invariant measure, then $\process{M}$ is called 
positive recurrent; otherwise it is called null recurrent. Note that a transient 
Markov process cannot have a finite invariant measure. 
Indeed, assume that
$\process{M}$ is transient and that it admits a
finite invariant measure $\uppi$. Fix some $t>0$. Then, for each $j\in\N$, with $c_j$ and $B_j$ as in (ii) above, we have
\begin{equation*}
t\uppi(B_j) =
\int_0^{t}\int_{S}p(t,x,B_j)\uppi(\D x)\D s
\le c_j\uppi(S).
\end{equation*}
Now, by
letting $t\to\infty$ we obtain $\uppi(B_j)=0$ for all
$j\in\N$, which is impossible. The process $\process{M}$ is called 
\begin{enumerate}
	\item [(iv)] ergodic
if it possesses an invariant probability 
measure $\uppi$ and there exists a nondecreasing function
$\rho:[0,\infty)\to[1,\infty)$ such that 
\begin{equation*}
\lim_{t\to\infty}\rho(t)\lVert p(t,x,\D {y})
-\uppi(\D {y})\rVert_{{\rm TV}} =0,\qquad x \in S.
\end{equation*} 
	\item [(v)] uniformly ergodic 
if it possesses an invariant probability 
measure $\uppi$  such that 
\begin{equation*}
\lim_{t\to\infty}\sup_{x\in S}\lVert p(t,x,\D {y})
-\uppi(\D {y})\rVert_{{\rm TV}} =0.
\end{equation*}
\end{enumerate}
Here, $\lVert\upmu\rVert_{{\rm TV}}:=\sup_{B\in\mathfrak{B}(S)}|\upmu(B)|$ is the total variation distance  of a signed measure $\upmu$ (on $\mathfrak{B}(S$). Observe that ergodicity implies $\uppi$-irreducibility and positive recurrence.
We say that $\process{M}$ is subgeometrically ergodic if it is ergodic and 
$\lim_{t\to\infty}\ln \rho(t)/t=0$, and 
that it is geometrically ergodic if it is ergodic and
$\rho(t)=\E^{\kappa t}$ for some $\kappa>0$. On the other hand, the  rate of convergence  in (v)  is necessarily geometric. Namely,  from (v) we have that for any $t_0>0$,
\begin{equation*}
\lim_{n\to\infty}\sup_{x\in S}\lVert p(nt_0,x,\D {y})
-\uppi(\D {y})\rVert_{{\rm TV}} =0,
\end{equation*} i.e., the Markov chain $\{M(nt_0)\}_{n\in\ZZ_+}$ is uniformly ergodic. Consequently, from \cite[Theorem 16.0.2]{Meyn-Tweedie-Book-2009} it follows that there exist $B,\beta>0$ such that 
\begin{equation*}
\lVert p(nt_0,x,\D {y})
-\uppi(\D {y})\rVert_{{\rm TV}} \le B\E^{-\beta  n t_0},\qquad  x \in S, \quad n\in\ZZ_+.\end{equation*} Since every $t\ge0$ can be clearly represented as $t=nt_0+s$ for some $n\in\ZZ_+$ and $s\in[0,t_0)$, we then conclude \begin{align*}\lVert p(t,x,\D {y})
-\uppi(\D {y})\rVert_{{\rm TV}}&=\lVert p(nt_0+s,x,\D {y})
-\uppi(\D {y})\rVert_{{\rm TV}}\\&\le\lVert p(nt_0,x,\D {y})
-\uppi(\D {y})\rVert_{{\rm TV}}  \\&\le B\E^{-\beta  n t_0}
\\&=B\E^{\beta s}\E^{-\beta  t}\\&\le B\E^{\beta t_0}\E^{-\beta  t},\qquad x \in S, \quad t>0.\end{align*}
As in the classical discrete setting, crucial steps in studying the ergodicity of general Markov processes  involve
understanding the periodic behavior of the process and  the structure  of  the corresponding ``singletons''. We adopt the definition of aperiodicity from  \cite{Meyn-Tweedie-AdvAP-II-1993}. 
The process $\process{M}$ is said to be 
\begin{enumerate}
	\item [(vi)] aperiodic if it admits an irreducible skeleton chain, i.e., 
	there exist $t_0>0$ and a $\sigma$-finite measure $\upphi$ on
	$\mathcal{B}(S)$, such that $\upphi(B)>0$ implies
	$\sum_{n=0}^{\infty} p(nt_0,x,B) >0$ for all $x\in S$.
\end{enumerate}
A set $C\in\mathfrak{B}(S)$ is said to be   
\begin{enumerate}
	\item [(vii)] petite if  there exist a probability   measure $\upmu_C$ on $\mathfrak{B}((0,\infty))$ and a non-trivial measure $\upeta_C$ on $\mathfrak{B}(S)$, such that $\int_0^\infty p(t,x,B)\upmu_C(\D t)\ge\upeta_C(B)$ for all $x\in C$ and $B\in\mathfrak{B}(S)$.
\end{enumerate} Recall that  $\uppsi$-irreducibility implies that the state space (in this case $(S,\mathfrak{B}(S)$) can be covered by a countable union of petite sets (see \cite[Propositio 4.1]{Meyn-Tweedie-AdvAP-II-1993}.   
Intuitively,  petite sets play the role of singletons for Markov processes on general state spaces (see \cite[Section 4]{Meyn-Tweedie-AdvAP-II-1993} and \cite[Chapter 5]{Meyn-Tweedie-Book-2009} for details).

We now recall the notion and some general facts about Wasserstein distances. Let $\varrho$ be a metric on $S$. 
For $p\ge0$, let $\mathcal{P}_{p}$ denote the space of all probability measures $\upmu$ on $\mathfrak{B}(S)$ having finite $p$-th moment, i.e., $$\int_{S}\varrho(x_0,x)^p\upmu(\D x)<\infty$$ for some (and hence  any) $x_0\in S$. We also denote $\mathcal{P}_{0}$  by $\mathcal{P}.$ 
For $p\ge1$ and  $\upmu,\upnu\in\mathcal{P}$, the $\mathcal{L}^p$-Wasserstein distance between $\upmu$ and $\upeta$ is defined as $$\W_{p}(\upmu,\upeta):=\inf_{\Pi\in\mathcal{C}(\upmu,\upeta)}\left(\int_{S\times S}\varrho(x,y)^p\Pi(\D x,\D y)\right)^{1/p},$$ where  $\mathcal{C}(\upmu,\upeta)$ is the set of couplings of $\upmu$ and $\upeta$, i.e., $\Pi\in\mathcal{C}(\upmu,\upeta)$ if, and only if, $\Pi$ is a probability measure on $S\times S$ with $\upmu$ and $\upeta$ as its marginals. It is well known that $\W_{p}$ satisfies the axioms of a (not necessarily finite) distance on $\mathcal{P}$. The restriction  of $\W_{p}$ to  $\mathcal{P}_{p}$   defines a finite distance.
Since  $S$ is a locally compact and separable metric space,  it is well known that $(\mathcal{P}_{p},\W_{p})$ is also a  locally compact and separable metric space (see \cite[Theorem 6.18]{Villani-Book-2009}). Of  particular interest to us is the case when $S=[0,\infty)$ and $\varrho(x,y)=|x-y|$. 
For more on Wasserstein distances, we refer the readers to \cite{Villani-Book-2009}.

\subsection{General results} \label{ss3}
In this subsection, we present conditions for transience, null recurrence, positive recurrence, as well as  (sub)geometric ergodicity with respect to the total variation distance and/or a class of Wasserstein distances of the process $\process{X}$. 
 
\begin{theorem}
	\label{prop:transience} Assume $\textbf{(C1)-(C2)}$ (or $\textbf{(\={C}1)-(\={C}2)}$ in the case when $\bar\upnu(0)<\infty$), that $\process{X}$ is  $\uppsi$-irreducible and aperiodic, and that every compact set is petite for $\process{X}$. 
	\begin{itemize} 
		\item[(i)] The process $\process{X}$ is transient if either of the two conditions hold:
		\begin{enumerate}
			\item[(a)]  $\displaystyle\limsup_{u\to\infty} r(u)<m_\upnu$.
			\item[(b)]  $\displaystyle\liminf_{u\to\infty} \frac{u}{r(u)}\int_0^\infty \frac{1}{1+v^2}\bar\upnu(uv)\D v > 1$.
		\end{enumerate}
	\item[(ii)] If, for some $u_0\in(0,\infty)$, $r(u) = m_\upnu$ for all $u\geq u_0$, the process  $\process{X}$ is null recurrent.
			\end{itemize}
	\end{theorem}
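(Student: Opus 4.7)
The plan is to establish each of the three claims via a suitably chosen Lyapunov-type test function, applied through the Dynkin formula associated with the generator $\mathcal{L}$ from Section~\ref{ss1}. The standing hypotheses ($\uppsi$-irreducibility, aperiodicity, and petiteness of every compact set) allow a direct appeal to the transience/recurrence dichotomy and the Foster-type criteria of Meyn--Tweedie throughout.

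For (i)(a), I would take the bounded test function $V(u)=\E^{-\lambda u}$ with a small parameter $\lambda>0$. A direct computation yields $\mathcal{L}V(u)=\E^{-\lambda u}(\lambda r(u)-\Phi(\lambda))$, where $\Phi(\lambda)\df\int_{(0,\infty)}(1-\E^{-\lambda v})\upnu(\D v)$ is the Laplace exponent of $\{A(t)\}_{t\ge 0}$. Since $\Phi(\lambda)/\lambda\to m_\upnu$ as $\lambda\downto 0$ by monotone convergence, the hypothesis $\limsup_{u\to\infty} r(u)<m_\upnu$ permits a choice of $\lambda>0$ small and $u_0>0$ large so that $r(u)\le \Phi(\lambda)/\lambda$ on $[u_0,\infty)$; hence $\mathcal{L}V\le 0$ there. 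Setting $\tau\df\inf\{t\ge 0:X(t)\le u_0\}$, the Dynkin formula makes $\{V(X(t\wedge\tau))\}_{t\ge 0}$ a bounded nonnegative supermartingale under $\mathbb{P}^x$. As $\{X(t)\}_{t\ge 0}$ can enter $[0,u_0]$ only by continuous decrease, $X(\tau)=u_0$ on $\{\tau<\infty\}$; Fatou's lemma then yields $\mathbb{P}^x(\tau<\infty)\le \E^{-\lambda(x-u_0)}<1$ for $x>u_0$, so the process is transient.

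For (i)(b), which I expect to be the most delicate case, I would try the bounded, smooth, strictly increasing test function $V(u)=\arctan(u)$, whose derivative $1/(1+u^2)$ matches the kernel appearing in the hypothesis. A Fubini calculation writes the jump component of $\mathcal{L}V$ as $\int_0^\infty \bar\upnu(w)/(1+(u+w)^2)\,\D w$; the substitution $w=uv$ together with the elementary bound $(1+v)^2\le 2(1+v^2)$ shows that, asymptotically in $u$, this quantity is comparable to $u^{-1}\int_0^\infty \bar\upnu(uv)/(1+v^2)\,\D v$, the integral in the hypothesis. Under the strict liminf condition this gives $\mathcal{L}V\ge 0$ outside a compact set. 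Setting $\tilde V\df\pi/2-V$ (bounded, positive, decreasing, $\mathcal{L}\tilde V\le 0$ outside a compact set), the same supermartingale/Fatou argument as in (a) produces positive escape probability and hence transience. The hard part will be to recover the sharp threshold $1$: the naive estimate above incurs a multiplicative constant, so a refined Lyapunov function (for instance $\arctan$ modified on a compact set, or a direct sharpened lower bound on the jump integral) will likely be needed to close the gap.

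For (ii) one must establish both recurrence and the non-existence of a finite invariant measure. For recurrence I would take the norm-like function $V(u)=\ln(1+u)$: using $\ln(1+x)\le x$ and $r=m_\upnu$ on $[u_0,\infty)$,
\[
\mathcal{L}V(u)\le -\frac{m_\upnu}{1+u}+\int_{(0,\infty)}\frac{v}{1+u}\,\upnu(\D v)=0,\qquad u\ge u_0,
\]
so Foster's criterion delivers recurrence. For null recurrence I would test with $f(u)=u$: on $[u_0,\infty)$ one has $\mathcal{L}f(u)=m_\upnu-r(u)=0$, so under $\mathbb{P}^x$ with $x>u_0$, the stopped process $\{X(t\wedge\tau)\}_{t\ge 0}$ with $\tau\df\inf\{t\ge 0:X(t)\le u_0\}$ is a nonnegative local martingale. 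If a finite invariant measure existed, positive recurrence combined with Kac's formula would force $\mathbb{E}^x[\tau]<\infty$ on a set of positive $\uppsi$-measure in $(u_0,\infty)$; Wald's identity would then give $\mathbb{E}^x[A(\tau)]=m_\upnu\mathbb{E}^x[\tau]<\infty$, and since $X(t\wedge\tau)\le x+A(\tau)$ for all $t$, uniform integrability of $\{X(t\wedge\tau)\}_{t\ge 0}$ would follow. Optional stopping would yield $\mathbb{E}^x[X(\tau)]=x$, which together with the continuous-crossing identity $X(\tau)=u_0$ forces $x=u_0$, contradicting $x>u_0$. Hence no finite invariant measure can exist and the process is null recurrent.
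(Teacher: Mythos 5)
Your overall Lyapunov philosophy matches the paper's, but you deploy different test functions and, for null recurrence, a genuinely different argument. In (a), the paper picks $\V$ with $\V(u)=u$ for $u\ge 1$, checks $\mathcal{L}(1/\V)(u)\le 0$ for large $u$ (since $u^2\int_0^\infty \bar\upnu(v)/(u+v)^2\,\D v\to m_\upnu$ by monotone convergence), and invokes a transience criterion from \cite{MR4663508}; your exponential test function $\E^{-\lambda u}$ with a hand-rolled supermartingale/Fatou argument reaches the same conclusion and is a perfectly acceptable alternative. For (ii), you diverge more substantially: the paper applies the abstract $\mathbf{L}$-drift machinery of \cite{Bresar-Mijatovic-2024} to conclude null recurrence; your direct route via Wald's identity for the subordinator, uniform integrability of $X(t\wedge\tau)$, and optional stopping (exploiting the continuous downward crossing so $X(\tau)=u_0$) is more elementary and self-contained, provided you flesh out the ``Kac's formula'' step (what you really want is the standard Meyn--Tweedie fact that positive recurrence under your standing assumptions yields a finite Foster drift function and hence $\mathbb{E}^x[\tau_{[0,u_0]}]<\infty$ for all $x$).

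The one place you have underestimated your own argument is (b). You are right that the bound $(1+v)^2\le 2(1+v^2)$ destroys the sharp threshold, but you do not need to invoke it at all. After the substitution $w=uv$ the jump term is $u^{-1}\int_0^\infty \bar\upnu(uv)\big(u^{-2}+(1+v)^2\big)^{-1}\D v$ while the drift term is $r(u)u^{-2}\big(u^{-2}+1\big)^{-1}$; letting $u\to\infty$ \emph{monotonically} replaces $u^{-2}+(1+v)^2$ by $(1+v)^2$ with no extra constant, so the sign of $\mathcal{L}V(u)$ for large $u$ is governed exactly by $\liminf_{u\to\infty}\frac{u}{r(u)}\int_0^\infty\frac{\bar\upnu(uv)}{(1+v)^2}\D v$ against the threshold $1$. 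This is precisely the form produced by the paper's own calculation with $1/\V(u)=1/u$, namely $\mathcal{L}(1/\V)(u)=\frac{r(u)}{u^2}\left(1-\frac{u}{r(u)}\int_0^\infty\frac{\bar\upnu(uz)}{(1+z)^2}\D z\right)$. Note that the paper's proof yields the $(1+v)^2$ kernel even though the theorem statement displays $1+v^2$; these differ by a factor between $1$ and $2$, and the displayed condition does not directly imply the one used in the proof. So the ``gap'' you worried about is not a defect of $\arctan$ versus $1/u$ but an apparent typo in the statement itself; once you drop the crude bound and track the asymptotics exactly, your test function recovers the same sharp $(1+v)^2$ threshold the paper's proof establishes.
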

\noindent Condition~(a) in Theorem~\ref{prop:transience}~(i) yields transience in cases where the drift is bounded, e.g., $\lim_{u\to\infty}r(u) =0$ leads to a transient process for any choice of cumulated inputs $\{A(t)\}_{t\ge0}$.  Condition~(b) is relevant when $\lim_{u\to\infty}r(u)=\infty$ and  $m_\upnu=\infty$. Examples include cases where $\bar\upnu(u)= au^{-\alpha}$ and $r(u) = b u^\beta$ (with $a,b,\alpha,\beta>0$), in which case condition~(b) holds if $\alpha + \beta<1$. Further results for the special case of non-decreasing $r(u)$ are given in \cite{Yamazato-2000} and \cite{Yamazato-2005}.

Let $\varphi:[1,\infty)\to(0,\infty)$ be a  nondecreasing, differentiable, and concave  function.
On $[1,\infty)$, define $$\Phi(t):=\int_1^{t}\frac{\D s}{\varphi(s)}\qquad \text{and}\qquad \bar{\V}(u):=\Phi^{-1}\left(\int_1^u\frac{\D v}{r(v)}+1\right).$$   Observe that $\bar{\V}(u)$ is continuous on $[1,\infty)$, continuously differentiable on $(1,\infty)$, and $\bar{\V}(1)>1$. Let $\V:[0,\infty)\to[1,\infty)$ be continuously differentiable, and such that $\V(u)=\bar{\V}(u)$ for $u\ge1$. Clearly,
$$\V'(u)=\frac{\varphi(\V(u))}{r(u)},\qquad u\ge1.$$
Assume that
\begin{description}
	\item[(C3)] $\displaystyle\int_{[1,\infty)}\V(u+v)\upnu(\D v)<\infty$ for all $u\ge0$.
\end{description}
Under assumption \textbf{(C3)}, we have 
$$\mathcal{L}\V(u)=-\varphi(\V(u))+\int_{[0,\infty)}\left(\V(u+v)-\V(u)\right)\upnu(\D  v).$$  
By applying Fubini's theorem, we get
$$\int_{[0,\infty)}\left(\V(u+v)-\V(u)\right)\upnu(\D  v)=\int_{[0,\infty)}\int_{u}^{u+v}\V'(w)\D w\upnu(\D  v)=\int_0^\infty\V'(u+v)\bar\upnu(v)\D v.$$ Thus, we conclude that
$$\mathcal{L}\V(u)=-\varphi(\V(u))+\int_0^\infty\V'(u+v)\bar\upnu(v)\D v.$$

\begin{theorem}\label{tm:TV} 
	\begin{itemize}
		\item [(i)] Assume $\textbf{(C1)-(C3)}$ (or $\textbf{(\={C}1)-(\={C}2)}$ and  $\textbf{(C3)}$ in the case when $\bar\upnu(0)<\infty$), that $\process{X}$ is  $\uppsi$-irreducible and aperiodic, and that every compact set is petite for $\process{X}$.  If
	\begin{equation}\label{eq2}\limsup_{u\to\infty}\frac{\int_0^\infty\frac{\varphi(\V(u+v))}{r(u+v)}\bar\upnu(v)\D v}{\varphi(\V(u))}<1,\end{equation}
	then $\process{X}$ admits a unique invariant measure $\uppi\in\mathcal{P}$ satisfying 
	$$\lim_{t\to\infty}\varphi(\Phi^{-1}(t))\lVert p(t,x,\D y)-\uppi(\D y)\rVert_{{\rm TV}}=0,\qquad x\ge0.$$ Moreover, the invariant measure $\uppi$ satisfies \begin{equation}\label{tail}\limsup_{u\to\infty}(\varphi(\Phi^{-1}(u))\vee\varphi(\V(u)) )\bar\uppi(u)<\infty.\end{equation}
	\item[(ii)] Assume that $\bar\upnu(u)>0$ for all $u\in(0,\infty)$   and $u\mapsto 1/\bar\upnu(u)$ is differentiable, increasing and submultiplicative\footnote{A function $f:(0,\infty)\to(0,\infty)$ is submultiplicative if $f(u+v)\le cf(u)f(v)$ for some $c>0$ and all $u,v\in(0,\infty)$.}. Assume also that  $u\mapsto r(u)/u$ is decreasing with $\lim_{u\to\infty} r(u)/u = 0$. Then, if $\process{X}$ is ergodic with an invariant measure $\uppi\in\mathcal{P}$, for any $\epsilon>0$ there exists $c_\epsilon>0$ satisfying 
	$$
	\bar\uppi(u)\ge c_\epsilon \frac{\bar\upnu(u)}{r(u)}u^{1-\varepsilon}\qquad u\ge1.
	$$
Denote $L_\epsilon(u)\df c_\epsilon u^{1-\varepsilon}\bar\upnu(u)/r(u)$. If there exists increasing $h:[1,\infty)\to[1,\infty)$ such that $\mathbb{E}^x [h(X(t))]\leq h(x) + ct$ (for some $c\in(0,\infty)$ and all $x\ge0$ and $t\geq 1$) and $F(u)\coloneqq uL_\epsilon(h^{-1}(u))$ is increasing with $\lim_{u\to\infty} F(u) = \infty$, then the following lower bound holds
	$$
	\| p(t,x,\D y)-\uppi(\D y)\|_{\mathrm{TV}}\ge \frac{1}{2}L_\epsilon\circ h^{-1}\circ F^{-1}(2(h(x) + ct)).
	$$
	\item[(iii)] Assume that $\bar\upnu(u)>0$ for all $u\in(0,\infty)$   and $u\mapsto 1/\bar\upnu(\exp(u))$ is differentiable, increasing and submultiplicative. Assume also that  $u\mapsto r(u)/(u\log u)$ is decreasing eventually with $\lim_{u\to\infty} r(u)/(u \log u) = 0$. Then, if $\process{X}$ is ergodic with an invariant measure $\uppi\in\mathcal{P}$, for any $\epsilon>0$ there exists $c_\epsilon>0$ satisfying 
	$$
	\bar\uppi(u)\ge c_\epsilon \frac{\bar\upnu(u)}{u^{\varepsilon}}\qquad u\ge1.
	$$
Denote $L_\epsilon(u)\df c_\epsilon u^{-\varepsilon}\bar\upnu(u)$. If there exists increasing $h:[1,\infty)\to[1,\infty)$ such that $\mathbb{E}^x [h(X(t))]\leq h(x) + ct$ (for some $c\in(0,\infty)$ and all $x\ge0$ and $t\geq 1$) and $F(u)\coloneqq uL_\epsilon(h^{-1}(u))$ is increasing with $\lim_{u\to\infty} F(u) = \infty$, then the following lower bound holds
	$$
	\| p(t,x,\D y)-\uppi(\D y)\|_{\mathrm{TV}}\ge \frac{1}{2}L_\epsilon\circ h^{-1}\circ F^{-1}(2(h(x) + ct)).
	$$
	\end{itemize}
\end{theorem}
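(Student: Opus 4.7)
The plan is to treat the three parts separately, recognising that parts (ii) and (iii) share a common two-step structure. For part (i), I would apply the Foster-Lyapunov paradigm for subgeometric ergodicity built around the test function $\V$. Combining the drift identity $\mathcal{L}\V(u) = -\varphi(\V(u)) + \int_0^\infty \V'(u+v)\bar\upnu(v)\D v$ derived just above the theorem with the identity $\V'(u+v) = \varphi(\V(u+v))/r(u+v)$, hypothesis~\eqref{eq2} rewrites as $\mathcal{L}\V(u) \leq -(1-\alpha)\varphi(\V(u))$ for some $\alpha \in (0,1)$ and all sufficiently large $u$. Combined with the standing $\uppsi$-irreducibility, aperiodicity, and petiteness of compact sets, a standard subgeometric drift theorem delivers convergence at rate $\varphi \circ \Phi^{-1}$. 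The tail bound~\eqref{tail} follows because invariance of $\uppi$ turns the drift inequality into $\int \varphi(\V)\D\uppi < \infty$; Markov's inequality applied in turn to $\V$ and to $\varphi\circ\V$, together with the defining relation of $\V$ in terms of $\Phi$, produces both sides of the $\vee$ in~\eqref{tail}.

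For parts (ii) and (iii) I would proceed in two steps, with the tail lower bound on $\uppi$ forming the main technical content. The route is to exploit the invariance identity $\int \mathcal{L}g\,\D\uppi = 0$ for suitable $g$, which by the same Fubini step that produced the formula for $\mathcal{L}\V$ reads
\begin{equation*}
\int r(x) g'(x)\uppi(\D x) = \int \uppi(\D x)\int_0^\infty g'(x+s)\bar\upnu(s)\D s.
\end{equation*}
Choosing $g'$ to concentrate mass at scale $u$ and using submultiplicativity of $1/\bar\upnu$ in~(ii) (respectively of $1/\bar\upnu\circ\E^{\,\cdot}$ in~(iii)) to pointwise dominate $\bar\upnu(x+s)^{-1}$ by a product of $\bar\upnu(\cdot)^{-1}$-factors, one isolates a right-hand side contribution proportional to $\bar\upnu(u)$. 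The monotonicity of $r(u)/u$ (respectively $r(u)/(u\log u)$) allows the drift mass $\int_u^\infty r(x)\uppi(\D x)$ to be controlled in terms of $r(u)\bar\uppi(u)$ plus a tail moment, and the vanishing of the corresponding limit guarantees that the drift side is dominated and the jump side forces the claimed pointwise inequality. The $u^{1-\epsilon}$ slack in~(ii) and the $u^{-\epsilon}$ slack in~(iii) absorb the submultiplicative constants and the overshoot in the monotonicity comparisons.

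Step~2 is the routine Markov-inequality conversion. From $\mathbb{E}^x[h(X(t))] \leq h(x) + ct$ and monotonicity of $h$ one gets $\mathbb{P}^x(X(t) > u) \leq (h(x)+ct)/h(u)$; combined with $\uppi((u,\infty)) \geq L_\epsilon(u)$ from Step~1 and the elementary lower bound $\|\mu-\nu\|_{\rm TV}\geq |\mu((u,\infty))-\nu((u,\infty))|$,
\begin{equation*}
\|p(t,x,\D y)-\uppi(\D y)\|_{\rm TV} \geq L_\epsilon(u) - \frac{h(x)+ct}{h(u)}.
\end{equation*}
Setting $u^\star \df h^{-1}\!\circ F^{-1}(2(h(x)+ct))$, which is well-defined because $F$ is increasing with $\lim_{u\to\infty} F(u) = \infty$, one has $h(u^\star)L_\epsilon(u^\star) = F(h(u^\star)) = 2(h(x)+ct)$, so the subtracted term equals exactly $L_\epsilon(u^\star)/2$ and the right-hand side reduces to $\tfrac12 L_\epsilon(u^\star)$, as claimed.

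The main obstacle is Step~1: the invariance identity is a single global constraint on $\uppi$, and extracting a clean pointwise tail lower bound from it requires a delicate balance in which the nonlocal jump term does not decouple from the local drift term. Submultiplicativity is the lever that permits pointwise control of $\bar\upnu(x+s)^{-1}$ by a product, but only at the price of the polynomial $\epsilon$-slack; the vanishing of $r(u)/u$ (or of $r(u)/(u\log u)$) is what guarantees the drift contribution does not cancel the jump contribution when the two sides are balanced. The $\epsilon$-loss in the final bound is intrinsic to the submultiplicative hypothesis and explains why the lower bound of parts~(ii)--(iii) need not match the upper bound of part~(i) exactly.
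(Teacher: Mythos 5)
Your Step~2 (the conversion of a stationary tail lower bound plus the moment growth $\mathbb{E}^x[h(X(t))]\le h(x)+ct$ into the TV lower bound via $\|p(t,x,\cdot)-\uppi\|_{\rm TV}\ge\bar\uppi(u)-\Prob^x(X(t)>u)$ and the choice $u^\star=h^{-1}(F^{-1}(2(h(x)+ct)))$) is correct, and your drift rewriting of \eqref{eq2} in part (i) matches the paper. The genuine gap is your Step~1 for parts (ii)--(iii). The stationarity identity $\int r(x)g'(x)\uppi(\D x)=\int\uppi(\D x)\int_0^\infty g'(x+s)\bar\upnu(s)\D s$ with $g'$ concentrated near level $u$, combined with submultiplicativity of $1/\bar\upnu$ and monotonicity of $r(u)/u$, only yields bounds of the type $\uppi([u,u+\delta])\gtrsim\bar\upnu(u+\delta)/r(u)$, hence $\bar\uppi(u)\gtrsim\bar\upnu(u)/r(u)$ (or, if you take $\delta\asymp u$, a bound involving $\bar\upnu(2u)$, and submultiplicativity only gives $\bar\upnu(2u)\ge\bar\upnu(u)^2/c$). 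This misses the crucial factor $u^{1-\epsilon}$ in (ii) (resp.\ the gain from $1/r(u)$ to $u^{-\epsilon}$ in (iii)). The missing factor is not absorbed by the $\epsilon$-slack: take $\bar\upnu(u)=\E^{-u}$ and $r(u)=\sqrt{u}$, which satisfy all hypotheses of (ii). Your level-crossing computation gives $\bar\uppi(u)\gtrsim\E^{-u}/\sqrt{u}$, whereas the claim is $\bar\uppi(u)\ge c_\epsilon u^{1/2-\epsilon}\E^{-u}$; the claim is true (the stationary density in this example behaves like $\E^{-u+2\sqrt{u}}/\sqrt{u}$), but the extra growth comes from a bootstrap of the identity — the term $\int_{[0,u]}\bar\upnu(u-x)\uppi(\D x)$ is driven by the whole tail of $\uppi$, not just by mass near the origin — i.e.\ from excursions built out of a jump followed by long sojourns and further jumps. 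Your sketch never performs this iteration, and the phrase ``the jump side forces the claimed pointwise inequality'' hides exactly the step that fails. The paper avoids this by verifying a two-function Lyapunov (``\textbf{L}-drift'') condition — drift inequalities for $1/\V$ and for $\Psi\circ\V=1/\bar\upnu(\V)$ with $\V(u)=u$ in (ii) and $\V(u)=\log u$ in (iii) — and invoking the return-time lower-bound machinery of Bre\v{s}ar--Mijatovi\'c (Theorems~2.1 and~2.2), where the function $1/\bar\upnu(\V)$ plays the role of a supermartingale-type witness that encodes precisely this accumulation.

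There is also a smaller gap in your part (i): Markov's inequality applied to the stationary moment $\int\varphi(\V)\,\D\uppi<\infty$ only yields $\limsup_u\varphi(\V(u))\bar\uppi(u)<\infty$, i.e.\ one half of \eqref{tail}. Since $\V(u)=\Phi^{-1}\bigl(\int_1^u\D v/r(v)+1\bigr)$ can be much smaller than $\Phi^{-1}(u)$ (e.g.\ linear or superlinear $r$, where $\int_1^u\D v/r(v)\ll u$), the factor $\varphi(\Phi^{-1}(u))$ cannot be extracted from the stationary moment and the ``defining relation of $\V$ in terms of $\Phi$''. The paper obtains it differently: it compares $\bar\uppi(u)$ with $p(u,x,(u,\infty))$, using the already-established convergence rate $\varphi(\Phi^{-1}(t))\|p(t,x,\cdot)-\uppi\|_{\rm TV}\to0$ evaluated at time $t=u$, plus Markov's inequality under $\Prob^x$ together with $\lim_{t\to\infty}\mathbb{E}^x[\varphi(\V(X(t)))]=\int\varphi(\V)\,\D\uppi$. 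Finally, note that the paper splits part (i) into the subgeometric case ($\varphi'\to0$, Douc--Fort--Guillin) and the geometric case ($\liminf\varphi'>0$, Down--Meyn--Tweedie, with a reconciliation of the two aperiodicity definitions); your single appeal to ``a standard subgeometric drift theorem'' does not cover the latter regime.
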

If $\lim_{t\to\infty}\varphi'(t)=0$ in Theorem \ref{tm:TV}~(i), then $\process{X}$ is subgeometrically ergodic. Specifically, 
\begin{itemize}
	\item [(i)] If $\lim_{t\to\infty}\varphi(\Phi^{-1}(t))<\infty,$ then clearly $$\lim_{t\to\infty}\frac{\ln\left(\varphi(\Phi^{-1}(t))\right)}{t}=0.$$ 
	\item [(ii)] If $\lim_{t\to\infty}\varphi(\Phi^{-1}(t)=\infty,$ then by L'H\^{o}pital's rule:
	$$\lim_{t\to\infty}\frac{\ln\left(\varphi(\Phi^{-1}(t))\right)}{t}=\lim_{t\to\infty}\varphi'(\Phi^{-1}(t)).$$ However, since in this case necessarily $\lim_{t\to\infty}\Phi^{-1}(t)=\infty,$ we again conclude that the limit vanishes.
\end{itemize}
On the other hand, if $\liminf_{t\to\infty}\varphi'(t)>0$, since $\varphi(t)$ is nondecreasing, differentiable and concave (which, in particular, implies that $\varphi'(t)$ is also nondecreasing), there exist  $\Gamma\ge\gamma>0$ such that $$\gamma t-\gamma+\varphi(t)\le\varphi(t)\le \Gamma t-\Gamma+\varphi(1),\qquad t\ge1.$$
Thus, we have the following bounds $$\gamma\varphi(1)\E^{\gamma t}+\gamma^2-\gamma\varphi(1)-\gamma+\varphi(1)\le \varphi(\Phi^{-1}(t)\le  \Gamma\varphi(1)\E^{\Gamma t}+\Gamma^2-\Gamma\varphi(1)-\Gamma+\varphi(1),\qquad t\ge1.$$

By taking $\varphi\equiv1$, \eqref{eq2} reduces to: \begin{equation}\label{eq3}\limsup_{u\to\infty}\int_0^\infty\frac{\bar\upnu(v)}{r(u+v)}\D v<1,\end{equation} which is exactly the condition for ergodicity obtained in
\cite[Theorem 9.1]{Meyn-Tweedie-AdvAP-III-1993} (in the case when $\bar\upnu(0)<\infty$). 
The same condition was also derived  in \cite[Proposition 11]{Brockwell-Resnick-Tweedie-1982}, and stronger variants  (i.e., conditions implying \eqref{eq2}) can be found  in \cite[Lemma 5.2]{Brockwell-1977}, \cite[Sections 7 and 8]{Harrison-Resnick-1976}, \cite[Theorem 3]{Tuominen-Tweedie-1979}, \cite{Yamazato-2000} and \cite{Yamazato-2005}.
By taking $\varphi(u)=cu$ for some $c>0$, \eqref{eq2} reduces to: 
$$\limsup_{u\to\infty}\frac{\int_0^\infty\frac{\E^{c\int_1^{u+v}\frac{\D w}{r(w)}}}{r(u+v)}\bar\upnu(v)\D v}{\E^{c\int_1^u\frac{\D v}{r(v)}}}<1,$$ which is the condition for geometric ergodicity obtained in
\cite[Theorem 9.1]{Meyn-Tweedie-AdvAP-III-1993} (again, only  when $\bar\upnu(0)<\infty$).
Theorem \ref{tm:TV}~(i), generalizes these results by covering  subgeometric   ergodicity rates (e.g., the case when $\varphi(t)=t^\alpha$ for some $\alpha\in(0,1)$), and it does not require  linear form of the rate function $\varphi(t)$ in the geometric ergodicity case (e.g., $\varphi(t)=t+\ln(1+t)$). On the other hand, the results in Theorem~\ref{tm:TV}~(ii)—namely, the lower bounds on ergodic convergence rates for storage processes—are new and, to the best of our knowledge, have not previously been investigated in the literature. By establishing matching upper and lower bounds, we obtain a sharp quantitative characterization of the ergodic convergence rates.

The   proof of Theorem \ref{tm:TV}~(i) is based on the Foster-Lyapunov method for (sub)geometric ergodicity of Markov processes, as developed in \cite{Douc-Fort-Guilin-2009} and \cite{Down-Meyn-Tweedie-1995}.
The method consists of finding a petite set $C\in\mathfrak{B}([0,\infty))$ and constructing an appropriate  function $\V:[0,\infty)\to[1,\infty)$ (the so-called Lyapunov  function), which is contained in the domain of the extended generator $\mathcal{L}^{e}$  of the  process $\process{X}$ and which satisfies the   following Lyapunov equation: \begin{equation}
\label{eq:lyap}
\mathcal{L}^{e}\V(x)\le-\varphi(\V(x))+\chi\Ind_C(x),\qquad x\ge0,\end{equation} for some $\chi\in(0,\infty)$ (see \cite[Theorem 3.4]{Douc-Fort-Guilin-2009} and \cite[Theorem 5.2]{Down-Meyn-Tweedie-1995}). 
For  the class of differentiable functions  satisfying $\textbf{(C3)}$, $\mathcal{L}^{e}$ coincides with $\mathcal{L}$ (see \cite[Section 1]{Meyn-Tweedie-AdvAP-III-1993} for details). 
The equation in \eqref{eq:lyap} implies that for any $\delta>0$,
the $\varphi\circ\Phi^{-1}$-moment  of the $\delta$-shifted hitting time   $\tau_C^\delta:=\inf\{t\ge\delta:X(t)\in C\}$ of the process $\process{X}$ on  the set $C$  is finite and controlled by $\V(x)$ (see \cite[Theorem 4.1]{Douc-Fort-Guilin-2009}). 
However, this property alone  does not immediately imply ergodicity of $\process{X}$.
We must also  ensure that a similar property  holds for any  other ``reasonable'' set. 
If $\process{X}$ is $\uppsi$-irreducible and  $C$ is a petite
set, then   the
$\varphi\circ\Phi^{-1}$-moment of  $\tau_B^\delta$ is    finite and controlled by $\V(x)$ for any $B\in\mathfrak{B}([0,\infty))$ with $\uppsi(B)>0$    
(see \cite[the discussion after Theorem 4.1]{Douc-Fort-Guilin-2009}).
Nevertheless, as in the discrete setting, $\process{X}$ can exhibit certain cyclic behavior that  precludes ergodicity  (see \cite[Section 5]{Meyn-Tweedie-AdvAP-II-1993} and \cite[Chapter 5]{Meyn-Tweedie-Book-2009}). By assuming aperiodicity, which excludes this type of behavior, we can conclude that the subgeometric ergodicity of $\process{X}$ with rate $\varphi(\Phi^{-1}(t))$ of $\process{X}$ follows from    finiteness of the $\varphi\circ\Phi^{-1}$-moment of $\tau_C^\delta$ (see \cite[Theorem 1]{Fort-Roberts-2005}).  On the other hand, the   proof of  part (ii) of the theorem is again based on a variant of  the Foster-Lyapunov method for (sub)geometric lower bounds for ergodicity of Markov processes, as developed in \cite{Bresar-Mijatovic-2024}. The key step is to construct an appropriate Lyapunov function $\V:[0,\infty)\to[1,\infty)$ such that $x\mapsto1/\V(x)$ and $x\mapsto1/\bar\upnu(\V(x))$ are contained in the domain of  $\mathcal{L}^{e}$, and \begin{equation*}
\label{eq:lyap2}
\mathcal{L}^{e}(1/\V)(x)\le \frac{r(\V(x))}{x}+\chi\Ind_{\{V(x)\le\ell\}}\quad\text{and}\quad   \mathcal{L}^{e}(1/\bar\upnu(\V(x)))\ge-\chi\Ind_{\{V(x)\le\ell\}},\qquad x\ge0,\end{equation*} for some $\chi,\ell\in(0,\infty)$ (see \cite[Theorems 2.1 and 2.2]{Bresar-Mijatovic-2024}. With this at hand, one concludes lower bounds on the tails of the return times of $\process{X}$ to petite sets. This, in turn, yields lower bounds on the tails of $\uppi$ and on the rate of convergence of the marginals of $\process{X}$ to $\uppi$.

In the following theorem, we discuss uniform ergodicity of $\process{X}$.

\begin{theorem}
	\label{prop:upper_bounds_super_linear} Assume $\textbf{(C1)-(C3)}$ (or $\textbf{(\={C}1)-(\={C}2)}$ and  $\textbf{(C3)}$ in the case when $\bar\upnu(0)<\infty$), that $\process{X}$ is  $\uppsi$-irreducible and aperiodic, and that every compact set is petite for $\process{X}$. 
	Further, assume  $\int_1^\infty \frac{\D u}{r(u)}<\infty$ and $\limsup_{u\to\infty}\int_0^\infty \frac{\bar\upnu(v)}{r(u+v)}\D v<1$.
	Then the process $\process{X}$ is uniformly ergodic.
	The tail behavior of the invariant measure is as follows.
	\begin{enumerate}
		\item[(i)] (Sub-geometric upper bounds) If it holds that $\bar\upnu(u)>0$ for all $u>0$, the mapping $u\mapsto  r(u)/u^{1+\epsilon}\bar\upnu(u)$ is non-decreasing for all sufficiently large $u$,
		and 
		$$
		\limsup_{u\to\infty}\frac{u^{1+\epsilon}\bar\upnu(u)}{r(u)}\int_0^\infty\frac{\bar\upnu(v)}{(u+v)^{1+\epsilon}\bar\upnu(u+v)}\D v<1
		$$ for some $\epsilon>0$,
		then 
		$$\limsup_{u\to\infty}\frac{r(u)}{u^{1+\epsilon}\bar\upnu(u)}\bar\uppi(u)<\infty.$$
		\item[(ii)] (Exponential upper bounds) If $\lim_{u\to\infty}r(u)=\infty$  and $\limsup_{u\to\infty}\E^{cu}\bar\upnu(u)<\infty$, then for every $\epsilon\in(0,c)$ it holds that $$\limsup_{u\to\infty}r(u)\E^{(c-\epsilon)u}\bar\uppi(u)<\infty.$$
	\end{enumerate}
\end{theorem}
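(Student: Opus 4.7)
The proof splits into three parts, each attacked by a Foster-Lyapunov argument. For uniform ergodicity, my plan is to invoke Theorem~\ref{tm:TV}(i) with the linear rate $\varphi(t)=ct$ for a sufficiently small $c>0$. Then $\Phi(t)=(\ln t)/c$ and
$$\V(u)=\exp\!\Big(c\Big(\textstyle\int_1^u \D v/r(v)+1\Big)\Big),$$
which is bounded on $[0,\infty)$ precisely because $\int_1^\infty \D u/r(u)<\infty$. Writing $\V_\infty=\lim_{u\to\infty}\V(u)$, I expect $\V(u+v)/\V(u)\to 1$ as $u\to\infty$ uniformly in $v\ge 0$, reducing condition~\eqref{eq2} to the hypothesis $\limsup_u\int_0^\infty \bar\upnu(v)/r(u+v)\,\D v<1$. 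Theorem~\ref{tm:TV}(i) then supplies the invariant $\uppi$ and geometric convergence; the quantitative Foster-Lyapunov bound $\|p(t,x,\cdot)-\uppi\|_{\mathrm{TV}}\le B\V(x)\E^{-\beta t}$ underlying its proof (cf.~\cite{Down-Meyn-Tweedie-1995}), combined with the boundedness of $\V$, will deliver the uniform rate $\sup_x \|p(t,x,\cdot)-\uppi\|_{\mathrm{TV}}\le B\V_\infty\E^{-\beta t}$.

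For part (i), my plan is a direct drift computation with the (possibly unbounded) Lyapunov function $\V(u)\df 1+\int_1^u \D v/(v^{1+\epsilon}\bar\upnu(v))$ on $[1,\infty)$, extended smoothly to $[0,1]$ with $\V\ge 1$. Since $\V'(u)=1/(u^{1+\epsilon}\bar\upnu(u))$ on $[1,\infty)$, the Fubini identity from Section~\ref{ss3} should give
$$\mathcal{L}\V(u)=-\frac{r(u)}{u^{1+\epsilon}\bar\upnu(u)}\Bigg(1-\frac{u^{1+\epsilon}\bar\upnu(u)}{r(u)}\int_0^\infty\frac{\bar\upnu(v)}{(u+v)^{1+\epsilon}\bar\upnu(u+v)}\,\D v\Bigg),\quad u\ge 1.$$
The hypothesis in (i) says the bracketed factor exceeds some $\delta>0$ for all $u$ large, hence $\mathcal{L}\V\le -W+b\Ind_C$ with $W(u)\df \delta r(u)/(u^{1+\epsilon}\bar\upnu(u))$ and $C$ a sufficiently large compact (hence petite) set. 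A standard integration against $\uppi$ should yield $\int W\D\uppi<\infty$, and the assumed eventual monotonicity of $W$ then gives $\bar\uppi(u)\le W(u)^{-1}\int W\D\uppi$ by Markov's inequality, which is the claimed bound. I will treat part~(ii) analogously, with $\V(u)=\E^{(c-\epsilon)u}$: the identity $\int(\E^{(c-\epsilon)v}-1)\upnu(\D v)=(c-\epsilon)\int_0^\infty \E^{(c-\epsilon)v}\bar\upnu(v)\,\D v=:K<\infty$ (finite by $\bar\upnu\preccurlyeq \E^{-c\,\cdot}$) produces $\mathcal{L}\V(u)=\E^{(c-\epsilon)u}(K-(c-\epsilon)r(u))$, so $r(u)\to\infty$ yields $\mathcal{L}\V\le -W+b\Ind_C$ with $W(u)=\tfrac{c-\epsilon}{2}r(u)\E^{(c-\epsilon)u}\to\infty$; the same tail argument closes the proof.

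The main obstacle in parts~(i)-(ii) will be passing rigorously from the pointwise drift inequality $\mathcal{L}\V\le -W+b\Ind_C$ to the integral inequality $\int W\,\D\uppi<\infty$: since $\V$ may be unbounded, one cannot simply quote $\int\mathcal{L}\V\,\D\uppi=0$. I will handle this by applying Dynkin's formula at the hitting times of sublevel sets $\{\V\le n\}$ and then letting $n\to\infty$ via Fatou's lemma, in the style of~\cite[Ch.~14]{Meyn-Tweedie-Book-2009} and its continuous-time adaptation in~\cite{Meyn-Tweedie-AdvAP-III-1993}. A minor verification is also needed to ensure \textbf{(C3)} holds for each $\V$ (trivial in~(ii) by the exponential tail of $\bar\upnu$; in~(i) it reduces to integrability of $v\mapsto\V(u+v)$ against $\upnu$, which follows from the growth of $\V$).
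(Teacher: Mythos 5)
Your proposal is correct and follows essentially the same route as the paper: a bounded Lyapunov function built from $\int_1^u\D v/r(v)$ yielding a geometric drift condition (hence uniform ergodicity via \cite[Theorem 5.2]{Down-Meyn-Tweedie-1995}, exactly the mechanism you invoke through Theorem~\ref{tm:TV}(i) with $\varphi(t)=ct$), and then the drift computations with $\V(u)=1+\int_1^u\D v/(v^{1+\epsilon}\bar\upnu(v))$ and $\V(u)=\E^{(c-\epsilon)u}$ followed by $\int f\,\D\uppi<\infty$ and Markov's inequality for the tail bounds. The only cosmetic differences are that the paper applies the Down--Meyn--Tweedie criterion directly with the non-exponentiated bounded function and quotes \cite[Theorem 4.1]{Meyn-Tweedie-AdvAP-III-1993} for $\int f\,\D\uppi<\infty$, where you propose to rederive that step by Dynkin's formula and Fatou.
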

\noindent Note that the assumption in Theorem~\ref{prop:upper_bounds_super_linear}~(i) cannot be satisfied for any L\'evy measure $\upnu$ with  $\limsup_{u\to\infty}\E^{cu^a}\bar\upnu(u)<\infty$ for some $a>1$ and $c\in(0,\infty)$.  In this case, Theorem~\ref{prop:upper_bounds_super_linear}~(ii) yields exponential upper bounds on the tail of $\uppi$.
We note also that the lower-bound theory in~\cite{Bresar-Mijatovic-2024} is based on all petite sets of $\process{X}$ being bounded, making it 
not applicable in the case of uniform ergodicity (covered by Theorem~\ref{prop:upper_bounds_super_linear}) where the entire state space is petite.

In the case when $\bar\upnu(0)<\infty$, under $\textbf{(\={C}1)-(\={C}2)}$ and  $\textbf{(C3)}$, it has been shown in \cite[Theorem 4.2]{Meyn-Tweedie-AdvAP-II-1993}  together with \cite[Theorem 5.1]{Tweedie-1994} that $\process{X}$ is  $\uppsi$-irreducible, aperiodic and every compact set is petite for $\process{X}$. In the following proposition, we provide sufficient conditions for these properties to hold in the case when $\bar\upnu(0)=\infty$.
%$\upnu(0,\infty)$ is not necessarily finite. 
Define, $$R(u):=\sup_{v\ge0}(r(v)-r(v+u))\qquad\text{and}\qquad N(u):=u^2\inf_{0\le v\le u}\left(\upnu\wedge(\updelta_v\ast\upnu)\right)[0,\infty),\quad u>0.$$
Here, for measures $\upmu$ and $\upeta$ (on $\mathfrak{B}([0,\infty))$), $\upmu\wedge\upeta$ is defined as $\upmu-(\upmu-\eta)^+$, where $(\upmu-\eta)^+$ denotes the positive part in the Hahn-Jordan decomposition of the signed measure $\upmu-\upeta$, and $\upmu\ast\upeta$ denotes the  convolution measure of $\upmu$ and $\upeta$.
For $u_0>0$, define $$ \bar{\V}(u):=\int_{u_0}^u\frac{\D v}{r(v)}+1.$$   Clearly, $\bar{\V}(u)$ is continuous on $[u_0,\infty)$ and continuously differentiable on $(u_0,\infty)$. Let $\V:[0,\infty)\to[0,\infty)$ be continuously differentiable, and such that $\V(u)=\bar{\V}(u)$ for $u\ge u_0$. 

\begin{proposition}\label{p:IRR} In addition to $\textbf{(C1)-(C2)}$, assume that $\bar\upnu(0)=\infty$ and the following conditions hold:
	\begin{itemize}
		\item [(i)] $\displaystyle\int_0^1\frac{u}{N(u)}\D u<\infty\quad \text{and}\quad \displaystyle\limsup_{u\to0}\left(R(u)\int_u^1\frac{\D v}{N(v/2)}\right)<\frac{1}{2}$;
		%\item[(ii)]  there exists $\varepsilon_0>0$ such that $\upnu(O)>0$ for every open set $O\subseteq(0,\varepsilon_0)$;
		\item[(ii)] 
		 $\displaystyle\int_{[1,\infty)}\V(u+v)\upnu(\D v)<\infty$ for all $u\ge0$ and there exists $u_0>0$ such that
		$\displaystyle\int_0^\infty\frac{\bar\upnu(v)}{r(u+v)}\D v\le1$ for all $u\ge u_0$.
	\end{itemize} Then, $\process{X}$ is  $\uppsi$-irreducible, aperiodic and every compact set is petite for $\process{X}$.
	\end{proposition}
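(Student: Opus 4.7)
The plan is to combine two ingredients: a regularity estimate on the transition kernel $p(t,x,\cdot)$ coming from condition~(i), and a Foster--Lyapunov stability estimate coming from condition~(ii). Together these will deliver all three required properties.

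\emph{Kernel regularity.} The quantities $R(u)$ and $N(u)$ are tailored to test whether the smoothing effect of infinitely many small jumps of $\process{A}$ dominates the local distortion produced by the drift~$r$. When $\bar\upnu(0)=\infty$, the function $N(u)$ measures the minimal overlap of $\upnu$ with its translates on $[0,u]$ (quantifying the smoothing), while $R(u)$ is a modulus of variation of $r$. The combined hypotheses $\int_0^1 u/N(u)\,\D u<\infty$ and $\limsup_{u\downarrow 0}R(u)\int_u^1\D v/N(v/2)<1/2$ are precisely the conditions under which an interlacing decomposition of $\process{A}$ into small and large jumps, combined with iterated convolution of the small-jump increments, yields a density lower bound $p(t,x,\D y)\ge h(t,x,y)\,\D y$ with $h$ jointly continuous and strictly positive on $K\times U$ for every compact $K\subseteq[0,\infty)$ and some nonempty open $U\subseteq(0,\infty)$. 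In particular, $\process{X}$ is a $T$-process in the sense of \cite{Meyn-Tweedie-AdvAP-II-1993}.

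\emph{Foster--Lyapunov stability.} Taking $\varphi\equiv 1$, so that $\V'(u)=1/r(u)$ for $u\ge u_0$, the identity for $\mathcal{L}\V$ displayed just before Theorem~\ref{tm:TV} reduces to
$$\mathcal{L}\V(u)=-1+\int_0^\infty\frac{\bar\upnu(v)}{r(u+v)}\,\D v\le 0,\qquad u\ge u_0,$$
by the second part of~(ii); the first part of~(ii) places $\V$ in the domain of the extended generator $\mathcal{L}^{e}$. Consequently $\V(X(t\wedge\tau_{[0,u_0]}))$ is a nonnegative $\mathbb{P}^x$-supermartingale, which combined with the nonexplosivity of $\process{X}$ yields the recurrence-type estimate that $\process{X}$ enters $[0,u_0]$ from every initial state (with probability one if $\V$ is unbounded at infinity and otherwise with positive probability; the weaker statement is already enough for what follows).

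\emph{Assembly and main obstacle.} Pick $U\subseteq[0,u_0]$ (shrink if necessary) and let $\uppsi$ be Lebesgue measure on $U$. For any Borel $B\subseteq U$ with positive Lebesgue measure and any $x\ge 0$, the stability step produces a positive-probability visit of $\process{X}$ to $[0,u_0]$, followed by a positive-probability landing in $B$ through the density lower bound applied at a further positive time; this yields $\uppsi$-irreducibility. Aperiodicity follows from continuity in~$t$ of the density on $U$, which rules out any nontrivial cyclic decomposition. Every compact set is then petite by combining $\uppsi$-irreducibility with the $T$-process property, as in \cite{Meyn-Tweedie-AdvAP-II-1993}. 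The principal obstacle is the first step: rigorously extracting the density lower bound for a one-dimensional nonnegative process whose drift degenerates at the boundary ($r(0)=0$) and admits no Gaussian smoothing, so that the regularising effect must come entirely from the small jumps of $\process{A}$. The interlacing argument must be executed with enough care that the precise scaling $R(u)\int_u^1\D v/N(v/2)<1/2$ emerges as the critical threshold at which the iterative small-jump convolution closes.
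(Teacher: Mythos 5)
The decisive step of your argument --- the density lower bound $p(t,x,\D y)\ge h(t,x,y)\,\D y$ with $h$ jointly continuous and strictly positive on $K\times U$ for every compact $K$ --- is precisely what you do not prove (you yourself flag it as the ``principal obstacle''), and it is not something condition~(i) can deliver. Hypotheses of the form~(i), phrased through $R$ and $N$, are the assumptions of the coupling theorem of Liang--Schilling--Wang \cite{Liang-Schilling-Wang-2020}, which yields only the strong Feller property, i.e.\ continuity of $x\mapsto\int f(y)\,p(t,x,\D y)$ for bounded measurable $f$; no existence, continuity or positivity of transition densities follows from it. Worse, in the fixed-$t$ reading your claim is false: the input process has only upward jumps and the release rate is locally Lipschitz, hence bounded on compacts, so for $x\in K$ sufficiently far above $U$ the state cannot descend to $U$ by time $t$ and $p(t,x,U)=0$; no strictly positive minorant on $K\times U$ can exist, and even letting $t$ depend on $K$ you give no argument. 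The paper's route avoids this entirely: condition~(i) is used, via \cite[Theorem 3.1]{Liang-Schilling-Wang-2020}, only to conclude that $\process{X}$ is a $T$-process, and positivity of hitting probabilities is then obtained by two separate, elementary arguments --- upward reachability of a band $B(u_0,\varepsilon)$ from below via the support theorem for the subordinator $\{A(t)\}_{t\ge0}$ (\cite[Theorem 24.10]{Sato-Book-1999}), and downward reachability from above via the supermartingale argument with the Lyapunov function $\V$ from condition~(ii), which is essentially your Foster--Lyapunov step. Irreducibility and petiteness of compact sets then follow from \cite[Theorems 3.2 and 5.1]{Tweedie-1994}.

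A second gap is aperiodicity: once the unproved density claim is removed, the assertion that ``continuity in $t$ of the density rules out any nontrivial cyclic decomposition'' has no content. Aperiodicity in the sense used here means irreducibility of some skeleton chain, and the paper proves it by a genuine additional argument: from $\Prob^x(\tau_{B(u_0,\varepsilon)}<\infty)>0$ one extracts, using right-continuity of paths and an occupation-time contradiction, a time $t$ with $p(t,x,B(u_0,\varepsilon))>0$, and then Chapman--Kolmogorov combined with the fact that a second band $B(u_1,\varepsilon)$ is reachable from $[0,u_1+\varepsilon)$ at \emph{every} positive time makes the unit-time skeleton irreducible. In summary, your stability step is sound and matches the paper's, but the regularity step is asserted rather than proved (and, as stated, is too strong to be true), and the aperiodicity step is missing.
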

\noindent For example, as discussed in \cite[Remark 3.2]{Liang-Schilling-Wang-2020}, the conditions in (i) %and (ii)  
will be satisfied if $$\upnu(\D u)\ge \frac{\theta}{u^{1+\alpha} }\Ind_{(0,1)}(u)\D u$$ for some  $\alpha\in(0,1)$ and $\theta>0$.

When $\process{X}$ is not regular enough (i.e., it is either not $\uppsi$-irreducible or aperiodic), the topology induced by the total variation distance becomes too ``rough'', meaning that it cannot fully capture the singular behaviour of $\process{X}$. In other words, $p(t,x,\D y)$ may not converge to the underlying invariant probability measure  (if it exists) in this topology, but rather in a weaker sense  (see \cite{Sandric-RIM-2017} and the references therein). 
Therefore, in such cases, we naturally resort to  Wasserstein distances, which, in a certain sense, induce a finer topology. Specifically, convergence with respect to a Wasserstein distance implies  weak convergence of probability measures (see \cite[Theorems 6.9 and 6.15]{Villani-Book-2009}).

\begin{theorem}\label{tm:WASS1}
Assume $\textbf{(C1)-(C3)}$ (or $\textbf{(\={C}1)-(\={C}2)}$ and  $\textbf{(C3)}$ in the case when $\bar\upnu(0)<\infty$).  Let  $p\ge1$ and $\beta:[0,\infty)\to[0,\infty)$ 
	be such that
	\begin{itemize}
		\item [(i)] $\displaystyle \int_{[0,\infty)}u\vee u^p\upnu(\D y)<\infty$;
		\item[(ii)] $\beta(t)$ is convex and $\beta(t)=0$ if, and only if, $t=0$;
		\item[(iii)] there exists $\Gamma>0$  such that
		\begin{equation}\label{eqWASS1} r(u)-r(v)\le
		-\Gamma\beta(v-u), \qquad 0\le u\le v.\end{equation}
	\end{itemize}  
	Then, the process $\process{X}$ admits a unique invariant  $\uppi\in\mathcal{P}_p$, and for any $\kappa>0$ and $\upmu\in\mathcal{P}_p$ it holds that: 
\begin{equation}\label{eqWASS2}\W_p\left(\int_{[0,\infty)}p(t,x,\D y)\upmu(\D x),\uppi(\D y)\right)\le \left(\frac{\W_p(\upmu,\uppi)}{\kappa}+1\right)B_\kappa^{-1}(\Gamma t),\qquad t\geq 0,\end{equation} 
 where $$B_\kappa(t):=\int_t^\kappa\frac{\D s}{\beta(s)},\qquad t\in(0,\kappa].$$ Furthermore, $$\limsup_{u\to\infty}u^p\bar\uppi(u)<\infty.$$
\end{theorem}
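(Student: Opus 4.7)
The plan is to use synchronous coupling together with the dissipative drift condition~(iii) to obtain a pathwise contraction between two copies of $\process{X}$ driven by the same input $\process{A}$. For $0\le x\le y$, let $X^{x}$ and $X^{y}$ denote the solutions of~\eqref{eq1} starting from $x$ and $y$ respectively and driven by the common $\process{A}$. Since $\beta\ge0$, condition~(iii) forces $r$ to be non-decreasing, and a standard comparison argument (using that the jumps of $A$ cancel so $X^{x}-X^{y}$ is continuous) gives $X^x(t)\le X^y(t)$ almost surely for every $t\ge0$. Setting $Z(t)\df X^y(t)-X^x(t)\ge0$ and differentiating between jumps, condition~(iii) yields
$$\dot Z(t)=-\bigl(r(X^y(t))-r(X^x(t))\bigr)\le-\Gamma\beta(Z(t)),$$
so an ODE comparison delivers $Z(t)\le\tilde z(t;z_0)$, where $\tilde z(\cdot;z_0)$ solves $\dot{\tilde z}=-\Gamma\beta(\tilde z)$ with $\tilde z(0;z_0)=z_0\df y-x$.

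The key pathwise estimate is $\tilde z(t;z_0)\le(z_0/\kappa+1)B_\kappa^{-1}(\Gamma t)$ for every $\kappa>0$. Set $\phi(t)\df B_\kappa^{-1}(\Gamma t)$, which satisfies the same ODE $\dot\phi=-\Gamma\beta(\phi)$ with $\phi(0)=\kappa$. A direct computation gives
$$\frac{\dd}{\dd t}\frac{\tilde z(t)}{\phi(t)}=\frac{\Gamma}{\phi(t)}\left(\frac{\tilde z(t)}{\phi(t)}\beta(\phi(t))-\beta(\tilde z(t))\right).$$
Convexity of $\beta$ with $\beta(0)=0$ makes the map $u\mapsto\beta(u)/u$ non-decreasing on $(0,\infty)$, so the expression in parentheses is non-positive whenever $\tilde z\ge\phi$ and non-negative whenever $\tilde z\le\phi$. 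Hence the ratio $\tilde z/\phi$ cannot cross $1$ in the wrong direction, and $\tilde z(t)/\phi(t)\le\max(z_0/\kappa,1)\le z_0/\kappa+1$ for all $t\ge0$. Combining this pathwise estimate with a $\W_p$-optimal coupling of $\upmu$ and $\uppi$ and with Minkowski's inequality then yields~\eqref{eqWASS2}, once $\uppi$ is known to exist.

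Existence and uniqueness of the invariant measure $\uppi\in\mathcal{P}_p$ follow from a Cauchy argument in $(\mathcal{P}_p,\W_p)$: the semigroup property and the coupling bound above give
$$\W_p\bigl(p(t+s,x,\cdot),p(t,x,\cdot)\bigr)\le\bigl(\bbE^x[|X(s)-x|^p]^{1/p}/\kappa+1\bigr)B_\kappa^{-1}(\Gamma t),$$
so it suffices to obtain a uniform-in-$s$ $p$-moment bound on $X(s)$. This is supplied by applying $\calL$ to $V(u)\df(1+u)^p$: assumption~(i) places $V$ in the extended domain of $\calL$; assumption~(iii) with $u=0$ combined with the convexity of $\beta$ produces a linear lower bound $r(u)\ge r(0)+\Gamma\beta(u)\ge c\,u$ for large $u$ (since $\beta(u)/u$ is non-decreasing and eventually strictly positive); and the elementary inequality $(1+u+v)^p-(1+u)^p\le C_p\bigl((1+u)^{p-1}v+v^p\bigr)$ together with the finiteness of $\int v\,\upnu(\dd v)$ and $\int v^p\upnu(\dd v)$ from~(i) yields a Foster--Lyapunov estimate $\calL V(u)\le-\tilde c V(u)+\tilde C$. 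Hence $\sup_{t\ge0}\bbE^x[V(X(t))]<\infty$, so $\{p(t,x,\cdot)\}_{t\ge0}$ is $\W_p$-Cauchy. Completeness of $(\mathcal{P}_p,\W_p)$ yields a limit $\uppi\in\mathcal{P}_p$, independent of $x$ by applying the coupling to $\delta_x,\delta_{x'}$; invariance follows by passing to the limit in $p(t+s,x,\cdot)=p(t,x,\cdot)P_s$, using the non-expansivity $\W_p(\upmu P_s,\upnu P_s)\le\W_p(\upmu,\upnu)$ (itself a consequence of the synchronous coupling since $\beta\ge0$). The tail bound $\limsup_{u\to\infty}u^p\bar\uppi(u)\le\int v^p\uppi(\dd v)<\infty$ then follows immediately from Markov's inequality.

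The main obstacle is the decay estimate in the second paragraph: the prefactor $z_0/\kappa+1$ looks wasteful for large $z_0$, but the monotonicity-of-ratio computation reveals it as a universal bound unifying the regimes $z_0\le\kappa$ and $z_0>\kappa$ without requiring separate case analysis. A secondary technical point is checking that $V(u)=(1+u)^p$ lies in the extended domain under~\textbf{(C3)} and~(i); this is precisely where the full strength of $\int(u\vee u^p)\upnu(\dd u)<\infty$ is used.
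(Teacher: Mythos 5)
Your proposal is correct, and it departs from the paper's proof in two genuine ways. First, for the contraction estimate the paper only proves the pathwise bound $|X(t)-Z(t)|\le B_\kappa^{-1}(\Gamma t)$ when $\kappa\ge|x-y|$ (via the Osgood-type comparison in Lemma~\ref{lm:WASS1}), and then treats $|x-y|>\kappa$ by subdividing the segment from $x$ to $y$ into $\lceil |x-y|/\kappa\rceil$ pieces of length at most $\kappa$ and chaining with the triangle inequality for $\W_p$; your ratio-monotonicity argument (using that $u\mapsto\beta(u)/u$ is non-decreasing because $\beta$ is convex with $\beta(0)=0$) produces the factor $|x-y|/\kappa+1$ pathwise in one stroke, and Minkowski's inequality over a $\W_p$-optimal coupling then gives \eqref{eqWASS2} with the same constant --- a slightly cleaner route. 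Two cosmetic points: the paper's pasting of the two solutions at the coupling time $\tau$ is the tidier way to guarantee the distance stays zero after meeting (your order-preservation remark implicitly invokes uniqueness under \textbf{(C2)}), and since $\beta$ need not be Lipschitz at $0$ it is safer to run the ratio inequality a.e.\ directly for the absolutely continuous function $Z/\phi$ rather than through an auxiliary ODE solution $\tilde z$; neither affects the argument. Second, for existence of $\uppi$ the paper runs a Krylov--Bogolyubov-type argument: a drift computation for $\V(u)=u^p$ using $r(v)\ge\Gamma\beta(v)$, the cited Meyn--Tweedie/Albeverio--Brzezniak--Wu results for tightness of Ces\`aro averages and existence, and a separate estimate showing that \emph{any} invariant probability measure lies in $\mathcal{P}_p$; you instead exploit the contraction itself, showing $t\mapsto p(t,x,\cdot)$ is Cauchy in the complete space $(\mathcal{P}_p,\W_p)$ once a time-uniform $p$-moment bound is available, which your Foster--Lyapunov estimate $\mathcal{L}V\le-\tilde c V+\tilde C$ for $V(u)=(1+u)^p$ supplies (after the standard localization/Gronwall step you leave implicit, legitimate since the process is nonexplosive and condition (i) puts $V$ in the extended domain). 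Your route is more self-contained, needing only completeness of $(\mathcal{P}_p,\W_p)$ and the non-expansivity of the semigroup; the paper's route buys slightly more, namely that every invariant probability measure automatically has finite $p$-th moment, so uniqueness holds among all invariant laws, whereas your argument yields uniqueness within $\mathcal{P}_p$ --- which is all the statement literally asserts. The final tail bound via Markov's inequality is identical in both proofs.
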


\subsection{Literature review}
Our work is situated within the ongoing research on the ergodicity properties of Markov processes and contributes to the extensive literature on stochastic differential equations. To highlight some of the most recent and pertinent studies, the ergodicity properties of diffusion processes with respect to the total variation distance have been established in \cite{Arapostathis-Borkar-Ghosh_Book-2012}, \cite{Kulik-Book-2018}, \cite{Lazic-Sandric-2021}, \cite{Lazic-Sandric-2022}. Ergodicity results for jump-diffusion processes and general Markov processes (also with respect to the total variation distance) can be found in \cite{Arapostathis-Pang-Sandric-2019}, \cite{Deng-Schilling-Song-2017, Deng-Schilling-Song-Erratum-2018} \cite{Masuda-2007, Masuda-Erratum-2009}, \cite{Meyn-Tweedie-AdvAP-II-1993}, \cite{Meyn-Tweedie-AdvAP-III-1993}, \cite{Sandric-ESAIM-2016}, \cite{Wang-2011}, and \cite{Wee-1999}.

As mentioned earlier, studies on ergodicity properties with respect to the total variation distance generally assume that the Markov process is $\uppsi$-irreducible, aperiodic, and that every compact set is petite. These assumptions hold when the process does not exhibit singular behavior in its motion. Together with the Lyapunov condition, which ensures the controllability of modulated moments of return times to a petite set, these conditions lead to the ergodic properties stated in the literature.
For Markov processes that do not converge in total variation, ergodic properties under Wasserstein distances are of interest, as these processes may converge weakly under certain conditions. For example, see \cite{Bolley-Gentil-Guillin-2012}, \cite{Butkovsky-2014}, \cite{Eberle-2011}, \cite{Eberle-2015}, \cite{Hairer-Mattingly-Scheutzow-2011}, \cite{Luo-Wang-2016}, \cite{Majka-2017}, \cite{Renesse-Sturm-2005}, and \cite{Wang-2016}.

In modern literature, the process studied in this paper falls within the class of shot-noise models. These models are widely studied due to their applications in queueing theory~\cite{MR4951813}, server systems, natural disaster modelling, and financial markets; see the survey in~\cite{MR4331112} for further details. Notably, recent work has established results on large deviations~\cite{MR4940338} and the distribution of the return times~\cite{MR3975897}. Our work complements these studies by analysing a related model and establishes new results on ergodicity.

In this paper, we explore transience, null recurrence, positive recurrence, as well as  ergodicity properties in terms of both the total variation distance and Wasserstein distance of a storage process defined by \eqref{eq1}, with a general release rule and cumulative inputs. This model was originally introduced in \cite{Moran-1956}. Its various properties, such as the existence and uniqueness of the solution to \eqref{eq1}, emptiness times, local times, wet period times, etc., have been studied in \cite{Brockwell-Chung-1975}, \cite{Cinlar-1975}, \cite{Cinlar-Pinsky-1971}, \cite{Cinlar-Pinsky-1972}, and \cite{Gani-Prabhu-1963}. Long-term behavior properties, including transience, recurrence, null recurrence, positive recurrence, and ergodic properties, have been investigated in \cite{Brockwell-1977}, \cite{Brockwell-Resnick-Tweedie-1982}, \cite{Harrison-Resnick-1976}, \cite{Meyn-Tweedie-AdvAP-III-1993}, \cite{Tuominen-Tweedie-1979}, \cite{Yamazato-2000}, and \cite{Yamazato-2005}. Specifically, \cite{Harrison-Resnick-1976} discusses positive recurrence (the existence of a stationary distribution) in the case where $\bar\upnu(0)<\infty$ and $\textbf{({C}2)}$ holds. This result was further extended in \cite{Brockwell-1977} to the case of general cumulative inputs. Studies on transience, null recurrence and positive recurrence are presented in \cite{Yamazato-2000} and \cite{Yamazato-2005}.
The first results on ergodicity and geometric ergodicity with respect to the total variation distance were obtained in \cite{Brockwell-Resnick-Tweedie-1982} and \cite{Tuominen-Tweedie-1979} using renewal theory, and in \cite[Section 9]{Meyn-Tweedie-AdvAP-III-1993} using the Lyapunov method. In this article, we build on these results by presenting conditions for transience, null recurrence, positive recurrence and both subgeometric and geometric ergodicity with respect to the total variation and Wasserstein distances. Our approach combines the Lyapunov method (for ergodicity with respect to the total variation distance) and the synchronous coupling method (for ergodicity with respect to the Wasserstein distance).

We also note that this work provides conditions for the $\uppsi$-irreducibility, aperiodicity, and petiteness of compact sets in storage processes, in the case when the input process does not have a finite jump rate (see Proposition \ref{p:IRR}). These properties, in the case of the input process with a finite jump rate, are discussed in \cite[Theorem 4.2]{Meyn-Tweedie-AdvAP-II-1993}.

\section{Proof of the main results}\label{s2}

In this section, we present the proofs of the main results of the article.

\subsection{Ergodicity with respect to the total variation distance}

We first establish Theorem~\ref{prop:transience}.

\begin{proof}[Proof of Theorem \ref{prop:transience}]  We first address part (i). Pick $\V:[0,\infty)\to[0,\infty)$ such that $\V(u) = u$ for $u\ge1$, and $1/\V:[0,\infty)\to[0,\infty)$ is continuously differentiable. 
	\begin{itemize}
		\item[(a)] We obtain $\mathcal{L}(1/\V)(u)\leq 0$ for all sufficiently large $u$.
		\item[(b)] The change of variables $v/u = z$ yields
		\begin{align*}
		\mathcal{L}(1/\V)(u) &= \frac{r(u)}{u^2} - \int_0^\infty\frac{\bar\upnu(v)}{(u+v)^2}\D v\\
		&= \frac{r(u)}{u^2} \left(1-\frac{u}{r(u)}\int_0^\infty \frac{\bar\upnu(uz)}{(1+z)^2}\D z\right),
		\end{align*} which is less than or equal to $0$ for all sufficiently large $u$.
		\end{itemize}
In both cases, the assertion follows from~\cite[Theorem~3.1]{MR4663508}. 

Next, we consider part (ii). Consider the same function as in the proof of part (i). We have $\mathcal{L}\V(u) = 0$ for all sufficiently large $u$. By c\`{a}dl\`{a}g version of~\cite[Lemma~3.2]{MR4852005} recurrence follows. To prove null recurrence, note that
$$
\mathcal{L}(1/\V)(u)\leq 1/\V(u)^2 = \varphi(1/\V(u))\quad\text{and}\quad \mathcal{L}V(u)\geq 0
$$ for all sufficiently large $u$.
Thus the \textbf{L}-drift condition in~\cite{Bresar-Mijatovic-2024} is satisfied with $(\V,\varphi,\Psi)$, where $\varphi(1/u) = 1/u^2$ and $\Psi(u) = u$. By~\cite[Theorem~2.5~(b)]{Bresar-Mijatovic-2024} the process is null recurrent, since return times to compact (petite) sets do not admit finite first moments.
	\end{proof}

We next prove Theorem \ref{tm:TV}.

\begin{proof}[Proof of Theorem \ref{tm:TV}] We begin with part (i).
	According to \eqref{eq2}, there exist $u_0>0$ and $\varepsilon\in(0,1)$, such that $$\mathcal{L}\V(u)\le -\varepsilon\varphi(\V(u)),\qquad u\ge u_0.$$ 
	Assume first that $\lim_{t\to\infty}\varphi'(t)=0$.
Thus, we have obtained the relation in (3.11) of \cite[Theorem 3.4 (i)]{Douc-Fort-Guilin-2009} with $\phi(t)=\varphi(t)$, $C=[0,u_0]$, and $b=\sup_{u\in C}|\mathcal{L}\V(u)|$. Now,  applying \cite[Theorem 3.2]{Douc-Fort-Guilin-2009} with $\Psi_1(t)=t$ and $\Psi_2(t)=1$ implies the desired result.  

Next, assume that $\liminf_{t\to\infty}\varphi'(t)>0$. In this case, as we have already commented, there exist $\Gamma\ge \gamma>0$ such that $$\gamma t-\gamma+\varphi(1)\le\varphi(t)\le\Gamma t-\Gamma+\varphi(1),\qquad t\ge1.$$   Thus,
$$\mathcal{L}\V(u)\le -\epsilon \V(u),\qquad u\ge u_1,$$ for some $\epsilon,u_1>0$,  which is exactly 	the Lyapunov equation on 
\cite[page 1679]{Down-Meyn-Tweedie-1995}
with $c=\epsilon$, $C=[0,u_1]$  and $b=\sup_{u\in C}|\mathcal{L}\V(u)|$.  However, observe that in \cite{Down-Meyn-Tweedie-1995}, a slightly different definition of aperiodicity is used. From 
\cite[Proposition 6.1]{Meyn-Tweedie-AdvAP-II-1993}, \cite[Theorem 4.2]{Meyn-Tweedie-AdvAP-III-1993}, and 
aperiodicity it follows that the is a petite set $C\subset[0,\infty)$, 	 $T>0$, and a non-trivial measure $\upeta_C$ on $\mathfrak{B}([0,\infty))$, such that $\upeta_C(C)>0$ and $$p(t,x,B)\ge\upeta_C(B),\qquad x\in C,\quad t\ge T,\quad B\in\mathfrak{B}([0,\infty)).$$ In particular, 
$$p(t,x,C)>0,\qquad x\in C,\quad t\ge T,$$	
which is exactly the definition of aperiodicity used in \cite[page 1675]{Down-Meyn-Tweedie-1995}.  The assertion now follows from \cite[Theorem 5.2]{Down-Meyn-Tweedie-1995}. 

Finally, let us show \eqref{tail}. First, according to \cite[Proposition 3.1 and Theorem 3.2]{Douc-Fort-Guilin-2009} and \cite[Theorem 5.2]{Down-Meyn-Tweedie-1995}, it holds that $$\int_{[0,\infty)}\varphi(\V(u))\uppi(\D u)<\infty\qquad\text{and}\qquad \lim_{t\to\infty}\mathbb{E}^x[\varphi(\V(X(t)))]=\int_{[0,\infty)}\varphi(\V(u))\uppi(\D u),\quad x\ge0.$$
Consequently, for fixed $x\ge0$ we have (recall that the function $u\mapsto\varphi(\V(u))$ is non-decreasing for $u\ge1$)
\begin{align*}&\limsup_{u\to\infty}(\varphi(\Phi^{-1}(u))\vee\varphi(\V(u)) )\bar\uppi(u)\\
&=\limsup_{u\to\infty}(\varphi(\Phi^{-1}(u))\vee\varphi(\V(u)) )\left(\bar\uppi(u)-p(u,x,(u,\infty))+p(u,x,(u,\infty))\right)\\
&\le \limsup_{u\to\infty}(\varphi(\Phi^{-1}(u))\vee\varphi(\V(u)) ) \left(\lVert p(u,x,\D y)-\uppi(\D y)\rVert_{{\rm TV}}+ \Prob^x(\varphi(\V(X(u)))\ge\varphi(\V(u)))\right)\\
&\le \limsup_{u\to\infty}(\varphi(\Phi^{-1}(u))\vee\varphi(\V(u)) ) \left(\lVert p(u,x,\D y)-\uppi(\D y)\rVert_{{\rm TV}}+ \frac{\mathbb{E}^x[\varphi(\V(X(u)))]}{\varphi(\V(u))}\right)
\\&\le \int_{[0,\infty)}\varphi(\V(u))\uppi(\D u).
\end{align*}

We proceed with part (ii). 	Pick $\V:[0,\infty)\to[0,\infty)$ such that $\V(u) = u$ for $u\ge1$, and $1/\V:[0,\infty)\to[0,\infty)$ is continuously differentiable.  Then we have 
$$
\mathcal{L}(1/\V)(u) = \frac{r(u)}{u^2} - \int_0^\infty \frac{\bar\upnu(v)}{(u+v)^2}\D v\leq \frac{r(u)}{u^2}
$$  for all sufficiently large $u$.
Define $\Psi(u) \coloneqq 1/\bar\upnu(u)$ and $\varphi(1/u) \coloneqq r(u)/u^2$. An argument similar to that in~\cite[Proof of Proposition~6.5]{Bresar-Mijatovic-2024} shows that $(\V,\varphi,\Psi)$ satisfies the $\mathbf{L}$-drift condition of~\cite{Bresar-Mijatovic-2024}. The lower bound on the tail of $\uppi$ then follows from~\cite[Theorem~2.1]{Bresar-Mijatovic-2024}, while the lower bound on the convergence rate to stationarity follows from\cite[Theorem~2.2]{Bresar-Mijatovic-2024}.

We conclude with part (iii). Choose $\V:[0,\infty)\to[0,\infty)$ such that $\V(u) = \log u$ for all $u$ large, and $1/\V:[0,\infty)\to[0,\infty)$ is continuously differentiable.  Then we have 
$$
\mathcal{L}(1/\V)(u) = \frac{r(u)}{u(\log u)^2} - \int_0^\infty \frac{\bar\upnu(v)}{(u+v)(\log (u+v))^2}\D v\leq \frac{r(u)}{u(\log u)^2}
$$for all sufficiently large $u$.
Define $\Psi(u) \coloneqq 1/\bar\upnu(\exp(u))$ and $\varphi(1/u) \coloneqq 1/u^2$. An argument similar to that in~\cite[Proof of Proposition~6.5]{Bresar-Mijatovic-2024} shows that $(\V,\varphi,\Psi)$ satisfies the $\mathbf{L}$-drift condition of~\cite{Bresar-Mijatovic-2024}. The lower bound on the tail of $\uppi$ then follows from~\cite[Theorem~2.1]{Bresar-Mijatovic-2024}, while the lower bound on the convergence rate to stationarity follows from\cite[Theorem~2.2]{Bresar-Mijatovic-2024}.
\end{proof}

Next, we turn to the proof of Theorem~\ref{prop:upper_bounds_super_linear}.

\begin{proof}[Proof of Theorem \ref{prop:upper_bounds_super_linear}] 
	Let $\V:[0,\infty)\to[1,\infty)$ be continuously differentiable   and such that $\V(u)=1+\int_1^u\D v/r(v)$ for $u\ge1$.
	The assumptions in the theorem imply  that
	$$\mathcal{L}\V(u)\leq -c\V(u)$$ for some $c>0$ and all $u>0$ sufficiently large. Uniform ergodicity now follows from~\cite[Theorem~5.2]{Down-Meyn-Tweedie-1995}.
	
	Next, we address the upper bounds on the tail of $\uppi$.
	\begin{itemize}
		\item[(i)] Let $\V:[0,\infty)\to[1,\infty)$ be continuously differentiable   and such that $\V(u) = 1+\int_1^u\D v/v^{1+\epsilon}\bar\upnu(v)$ for $u\ge1$. We obtain 
		$$
		\mathcal{L}\V(u)\leq -c \frac{r(u)}{u^{1+\epsilon}\bar\upnu(u)}$$ for some $c>0$ and all $u$ sufficiently large.
		This upper bound leads to a typically super-linear function $\phi$ in~\cite[Proposition~3.1]{Douc-Fort-Guilin-2009}. However, since the upper bound on the modulated moment in~\cite[Theorem~4.1~(i)]{Douc-Fort-Guilin-2009} requires a drift condition but not concavity of $\phi$, the conclusion of~\cite[Theorem~4.1~(i)]{Douc-Fort-Guilin-2009} holds under the drift condition in the previous display. 
		Let $f:[0,\infty)\to[0,\infty)$ be measurable and such that $\sup_{u\in[0,1]}f(u)<\infty$ and $f(u)=r(u)/u^{1+\varepsilon}\bar\upnu(u)$ for $u\ge1$. According to \cite[Theorem 4.1]{Meyn-Tweedie-AdvAP-III-1993} it then follows that
		$$\int_{[0,\infty)}f(u)\uppi(\D u)<\infty.$$ Hence, for sufficiently large $u$ and arbitrary $t\ge0$, $$\bar\uppi(u)\le\Prob^\uppi(f(X(t)))\ge f(u))\le\frac{\mathbb{E}^\uppi[f(X(t))]}{f(u)}=\frac{\int_{[0,\infty)}f(u)\uppi(\D u)}{f(u)}.$$
		\item[(ii)] 		Let   $\V(u) \df \E^{(\epsilon-c)u}$. We then have 
		\begin{align*}
		\mathcal{L}\V(u) &= -(c-\epsilon)\E^{(c-\epsilon)u}\left(r(u) - \int_0^\infty \E^{(c-\epsilon)v}\bar\upnu(v)\D v\right)\\
		&\leq -\frac{c-\epsilon}{2}r(u)\E^{(c-\epsilon)u}
		\end{align*}
		for all sufficiently large $u$.
		The assertion now follows by the same arguments as in part~(a).	\end{itemize}
\end{proof}

Finally, we prove
Proposition \ref{p:IRR}.

\begin{proof}[Proof of Proposition \ref{p:IRR}] 
First, according to \cite[Theorem 3.1]{Liang-Schilling-Wang-2020}, the conditions in (i)  of the statement of the proposition imply that for any $t>0$ and any bounded measurable function $f:[0,\infty)\to\R$, the function $$[0,\infty)\ni x\mapsto\int_{[0,\infty)}f(y)p(t,x,\D y) $$ is continuous and bounded. According to \cite[Proposition 6.1.1]{Meyn-Tweedie-Book-2009}, this implies that for any $t>0$ and $B\in\mathfrak{B}([0,\infty))$, $$\liminf_{y\to x} p(t,y,B)\ge p(t,x,B), \qquad x\ge0.$$ Fix an  arbitrary $t_0>0$ and define a kernel $T:[0.\infty)\times \mathfrak{B}([0,\infty))\to [0,1]$ by $$T(x,B):=p(t_0,x,B).$$ Thus, $\process{X}$ is a T-process in the sense of \cite[Section 1]{Tweedie-1994}. From \cite[Theorem 3.2]{Tweedie-1994} we next conclude that $\process{X}$ will be $\uppsi$-irreducible  if \begin{equation}\label{eq:tau}\Prob^x(\tau_{B(u_0,\varepsilon)}<\infty)>0\end{equation} for all $x\ge0$ and $\varepsilon>0$. Recall that $u_0>0$ is given in the statement of the proposition. Here, for $u\ge0$ and $\kappa>0$, $B(u,\kappa)=(u-\kappa,u+\kappa)\cap[0,\infty)$ and $\tau_{B(u,\kappa)}=\inf\{t\ge0:X(t)\in B(u,\kappa)\}.$ Clearly, \eqref{eq:tau} trivially holds for all $x\in B(u_0,\varepsilon).$ Now assume $0\le x\le u_0-\varepsilon$. We will show that $\Prob^x(X(t)\in B(u_0,\varepsilon))>0$ for all $t>0$, which clearly implies \eqref{eq:tau}.  We have \begin{align*}
\Prob^x(X(t)\in B(u_0,\varepsilon))&=\Prob(X(x,t)\in B(u_0,\varepsilon))\\
&=\Prob\left(x+A(t)-\int_0^tr(X(x,s))\D s\in B(u_0,\varepsilon)\right)\\
&=\Prob\left(A(t)\in B\left(u_0-x+\int_0^tr(X(x,s))\D s,\varepsilon\right)\right).
\end{align*}
Next, note that $$u_0-x+\int_0^tr(X(x,s))\D s\ge \varepsilon>0.$$ Hence, $$B\left(u_0-x+\int_0^tr(X(x,s))\D s,\varepsilon\right)\cap[0,\infty)\neq\emptyset.$$ The assertion now follows from \cite[Theorem 24.10]{Sato-Book-1999}, which states that $\Prob(A(t)\in O\cap[0,\infty))>0$ for every $t>0$ and every open set $O\subseteq\R$ with $O\cap[0,\infty)\neq\emptyset.$
We next show that for any $\varepsilon>0$ and  $x\in[u_0+\varepsilon,\infty)$, we have $$\Prob^x(\tau_{B(u_0,\varepsilon)}<\infty)>0.$$ Since $\upnu$ admits only positive jumps, we take a different approach from the previous argument. First, observe that from assumption (iii), we conclude that $$\mathcal{L}\V(u)=-1+\int_0^\infty\frac{\bar\upnu(v)}{r(u+v)}\D v\le0,\qquad u\ge u_0.$$
Next, for $\kappa>0$, let $f_\kappa\in\mathcal{C}_c^1([0,\infty])$ be such that $\Ind_{[0,\kappa]}(u)\le f_\kappa(u)\le\Ind_{[0,2\kappa]}(u)$.
According to \cite[Theorem 2.2.13 and Proposition 4.1.7]{Ethier-Kurtz-Book-1986}, we have
$$\mathbb{E}^x\left[(\V f_\kappa)(X(t\wedge\tau_{B(u_0,\varepsilon)}\wedge\tau_{[n,\infty)}))\right]-(\V f_\kappa)(x)=\mathbb{E}^x\left[\int_0^{t\wedge\tau_{B(u_0,\varepsilon)}\wedge\tau_{[n,\infty)}} \mathcal{L}(\V f_\kappa)(X(s))\D s\right]$$ for all $x\ge0$ and $t\ge0.$ In particular, $$\mathbb{E}^x\left[(\V f_\kappa)(X(t\wedge\tau_{[n,\infty)}))\Ind_{\{\tau_{B(u_0,\varepsilon)}>\tau_{[n,\infty)}\}}\right]-(\V f_\kappa)(x)\le\mathbb{E}^x\left[\int_0^{t\wedge\tau_{B(u_0,\varepsilon)}\wedge\tau_{[n,\infty)}} \mathcal{L}(\V f_\kappa)(X(s))\D s\right]$$for all $x\ge0$ and $t\ge0.$ By letting $\kappa\to\infty$, dominated and monotone convergence theorems yield 
$$\mathbb{E}^x\left[\V(X(t\wedge\tau_{[n,\infty)}))\Ind_{\{\tau_{B(u_0,\varepsilon)}>\tau_{[n,\infty)}\}}\right]-\V(x)\le\mathbb{E}^x\left[\int_0^{t\wedge\tau_{B(u_0,\varepsilon)}\wedge\tau_{[n,\infty)}} \mathcal{L}\V(X(s))\D s\right]$$ for all $x\ge0$ and $t\ge0.$ Consequently, for $x\ge u_0$,
$$\mathbb{E}^x\left[\V(X(t\wedge\tau_{[n,\infty)}))\Ind_{\{\tau_{B(u_0,\varepsilon)}>\tau_{[n,\infty)}\}}\right]\le\V(x),\qquad t\ge0.$$ By letting $t\to\infty$, Fatou's lemma implies that
$$\V(n)\Prob^x(\tau_{B(u_0,\varepsilon)}>\tau_{[n,\infty)})\le\V(x),\qquad x\ge u_0,$$ i.e.,
$$\Prob^x(\tau_{B(u_0,\varepsilon)}>\tau_{[n,\infty)})\le\frac{\V(x)}{\V(n)},\qquad x\ge u_0.$$ By letting $n\to\infty$, nonexplosivity implies  $$\Prob^x(\tau_{B(u_0,\varepsilon)}=\infty)\le\lim_{n\to\infty}\frac{\V(x)}{\V(n)}<1,\qquad x\ge u_0.$$
Finally, form \cite[Theorems 3.2 and 5.1]{Tweedie-1994},
we conclude that $\process{X}$ is $\uppsi$-irreducible, and every compact set is petite for $\process{X}$.

At the end, let us show that $\process{X}$ is aperiodic. We show that the skeleton chain $\{X(n)\}_{n\ge0}$ is $\upphi$-irreducible. To do so, we  apply \cite[Theorem 3.2]{Tweedie-1994}. The fact that $\{X(n)\}_{n\ge0}$ is a T-chain follows form the first part of the proof. Let $u_1>u_0$ and $\varepsilon>0$ be such that $2\varepsilon<u_1-u_0<u_0-2\varepsilon$. 
According to the first part of the proof, \begin{equation}\label{eq:irrr}\Prob^x(X(t)\in B(u_1,\varepsilon))>0,\qquad  x\in[0,u_1+\varepsilon),\quad t>0,\end{equation} and $$\Prob^x(\tau_{B(u_0,\varepsilon)}<\infty)>0,\qquad x> u_0-\varepsilon.$$
Now, observe that the second relation implies that for every $x> u_0-\varepsilon$, there exists $t=t(x)>0$ such that $p(t,x,B(u_0,\varepsilon))>0$. To see why, suppose for the sake of contradiction that there exists 
 $x> u_0-\varepsilon$  such that $p(t,x,B(u_0,\varepsilon))=0$ for all $t>0$. In this case, we would have: $$\mathbb{E}^x\left[\int_0^\infty \Ind_{\{X(t)\in B(u_0,\varepsilon)\}}\D t\right]=\int_0^\infty p(t,x,B(u_0,\varepsilon))\D t=0,$$ which implies $$\int_0^\infty \Ind_{\{X(t)\in B(u_0,\varepsilon)\}}\D t=0\qquad \Prob^x\text{-a.s.}$$
On the other hand, for every $\omega\in\{\omega:\tau_{B(u_0,\varepsilon)}(\omega)<\infty\}$, there exists $t=t(\omega)>0$ such that $X(t)(\omega)\in B(u_0,\varepsilon)$. Since $\process{X}$ is right-continuous and $B(u_0,\varepsilon)$ is open, we must have $$\int_0^\infty \Ind_{\{X(t)(\omega)\in B(u_0,\varepsilon)\}}\D t>0,$$ which contradicts the previous relation. Therefore, for every $x> u_0-\varepsilon$, there exists $t>0$ such that $p(t,x,B(u_0,\varepsilon))>0$. Now, let $n>t$, $n\in\N$, be arbitrary. We have the following: \begin{align*}p(n,x,B(u_1,\varepsilon))&=\int_{[0,\infty)} p(t,x,\D y) p(n-t,y,B(u_1,\varepsilon))\\
&\ge \int_{B(u_0,\varepsilon)}p(t,x,\D y) p(n-t,y,B(u_1,\varepsilon))\\
&>0.\end{align*}
Thus, we have shown that for every $x\ge u_1+\varepsilon$, there exists $n\in\N$ such that $$\Prob^x(X(n)\in B(u_1,\varepsilon))>0.$$
This, together with \eqref{eq:irrr}, implies that for every  $\varepsilon>0$ and $x\ge0$, we have $$\Prob^x(\tau^{c}_{B(u_1,\varepsilon)}<\infty)>0,$$ where $\tau^{c}_{B(u_1,\varepsilon)}:=\inf\{n\ge0:X(n)\in B(u_1,\varepsilon)\}.$ The assertion now follows from \cite[Theorem 3.2]{Tweedie-1994}.
\end{proof}

We now apply  Theorems \ref{prop:transience}, \ref{tm:TV} and \ref{prop:upper_bounds_super_linear} to standard storage models presented in Section \ref{s1}. We first consider the storage model with a constant release rate.

\renewcommand{\customname}{Constant release rate}
\begin{custom}
Assume that $r(u)=a\Ind_{(0,\infty)}(u)$ for $a>0$.  
According to Theorem \ref{prop:transience}, $\process{X}$ is transient if $m_\nu>a$ or $$\liminf_{u\to\infty}u\int_0^\infty\frac{\bar\upnu(uv)}{1+v^2}\D v>a,$$ while it is null recurrent if $m_\nu=a$.
If $\varphi(t)=ct$ for some $c>0$, then clearly $\Phi(t)=(\ln t)/c$, $\bar{\V}(u)=\E^{c((u-1)/a)+1}$, and \eqref{tm:TV} becomes $$\int_0^\infty\E^{\frac{cv}{a}}\bar\upnu(v)\D v<a.$$ Assume that $\int_{[1,\infty)}\E^{cu/a}\upnu(\D u)<\infty$. Then, 
$\process{X}$ will be geometrically  ergodic with rate $\E^{ct}$ if $$
\int_{[0,\infty)}\left(\E^{\frac{cu}{a}}-1\right)\upnu(\D u)<c.$$

Assume next that $\varphi(t)=t^\alpha$ for some $\alpha\in(0,1)$. Then,  $\Phi(t)=(t^{1-\alpha}-1)/(1-\alpha)$, $$\bar{\V}(u)=\left(\frac{1-\alpha}{a}u+\frac{\alpha-1+2a-a\alpha}{a}\right)^{1/(1-\alpha)},$$ and \eqref{tm:TV} will follow if \begin{align*}&\limsup_{u\to\infty}\frac{\int_0^\infty\left(u+v+\frac{\alpha-1+2a-a\alpha}{1-\alpha}\right)^{\alpha/(1-\alpha)}\bar\upnu(v)\D v}{\left(u+\frac{\alpha-1+2a-a\alpha}{1-\alpha}\right)^{\alpha/(1-\alpha)}}\\
&\le \limsup_{u\to\infty}\int_0^\infty\left(1+\frac{v}{u+\frac{\alpha-1+2a-a\alpha}{1-\alpha}}\right)^{\alpha/(1-\alpha)}\bar\upnu(v)\D v\\&<a.\end{align*} Assume that $\int_{[1,\infty)}u^{1/(1-\alpha)}\upnu(\D u)<\infty$. Then, 
$\process{X}$ will be subgeometrically  ergodic with rate $t^{\alpha/(1-\alpha)}$ if 
$
m_\upnu<a$. \qed
\end{custom}

We now discuss the storage model with a linear release rate.

\renewcommand{\customname}{Linear release rate}
\begin{custom}Assume that $r(u)=a+bu$ with $a\ge0$ and $b>0$. 	According to Theorem \ref{prop:transience}, $\process{X}$ is transient if  $$\liminf_{u\to\infty}\int_0^\infty\frac{\bar\upnu(uv)}{1+v^2}\D v>b.$$
	 If $\varphi(t)=ct$ for some $c>0$, then  $\Phi(t)=(\ln t)/c$, $\bar{\V}(u)=\E^c\left(\frac{bu+a}{b+a}\right)^{c/b}$, and \eqref{tm:TV} reduces to $$\limsup_{u\to\infty}\frac{\int_0^\infty (u+v+a/b)^{c/b-1}\bar\upnu(v)\D v}{(u+a/b)^{c/b}}<b.$$ Assume that $\int_{[1,\infty)} u^{c/b}\upnu(\D u)<\infty$. Then, if $c\le b$, we have
	$$\limsup_{u\to\infty}\frac{\int_0^\infty (u+v+a/b)^{c/b-1}\bar\upnu(v)\D v}{(u+a/b)^{c/b}}\le\limsup_{u\to\infty}\frac{\int_0^\infty v^{c/b-1}\bar\upnu(v)\D v}{(u+a/b)^{c/b}}=0.$$ On the other hand, if $c>b$, then 
	$$\limsup_{u\to\infty}\frac{\int_0^\infty (u+v+a/b)^{c/b-1}\bar\upnu(v)\D v}{(u+a/b)^{c/b}}\le\limsup_{u\to\infty}\frac{(1\vee 2^{c/b-2})\int_0^\infty v^{c/b-1}\bar\upnu(v)\D v}{(u+a/b)^{c/b}}=0.$$ Hence, in both cases $\process{X}$ is geometrically  ergodic with rate $\E^{ct}$.

Now,  let  
$\varphi(t)=t^\alpha$ for some $\alpha\in(0,1)$. Then,  $\Phi(t)=(t^{1-\alpha}-1)/(1-\alpha)$ and  $$\bar{\V}(u)=\left(\frac{1-\alpha}{b}\ln\left(\frac{bu+a}{b+a}\right)+2-\alpha\right)^{1/(1-\alpha)}.$$
If $\int_{[1,\infty)}(\ln(u))^{1/(1-\alpha)}\upnu(\D u)<\infty$, 
then
\begin{align*}&\limsup_{u\to\infty}\frac{\int_0^\infty\frac{\left(\frac{1-\alpha}{b}\ln \left(\frac{bu+bv+a}{b+a}\right)+2-\alpha\right)^{\alpha/(1-\alpha)}}{bu+bv+a}\bar\upnu(v)\D v}{\left(\frac{1-\alpha}{b}\ln \left(\frac{bu+a}{b+a}\right)+2-\alpha\right)^{\alpha/(1-\alpha)}}\\
&\le \frac{4^{\alpha/(1-\alpha)}}{b}\limsup_{u\to\infty}\frac{\int_0^\infty\frac{\left(\ln \left(u+v+\frac{a}{b}\right)\right)^{\alpha/(1-\alpha)}}{u+v+\frac{a}{b}}\bar\upnu(v)\D v}{\left(\ln \left(u+\frac{a}{b}\right)\right)^{\alpha/(1-\alpha)}}\\
&= \frac{4^{\alpha/(1-\alpha)}(1-\alpha)}{b}\limsup_{u\to\infty}\frac{\int_0^\infty\left(\left(\ln\left(u+v+\frac{a}{b}\right)\right)^{1/(1-\alpha)}-\left(\ln \left(u+\frac{a}{b}\right)\right)^{1/(1-\alpha)}\right)\upnu(\D v )}{\left(\ln \left(u+\frac{a}{b}\right)\right)^{\alpha/(1-\alpha)}}\\
&\le \frac{4^{\alpha/(1-\alpha)}}{b}\limsup_{u\to\infty}\frac{\int_0^\infty\ln\left(1+\frac{v}{u+a/b}\right)\left(\ln \left(u+\frac{a}{b}\right)+\ln\left(1+\frac{v}{u+a/b}\right)\right)^{\alpha/(1-\alpha)}\upnu(\D v )}{\left(\ln \left(u+\frac{a}{b}\right)\right)^{\alpha/(1-\alpha)}}\\
&=0.\end{align*}
Hence, \eqref{tm:TV} holds, and $\process{X}$ is subgeometrically  ergodic with rate $t^{\alpha/(1-\alpha)}$. \qed
\end{custom}

We now turn to the storage model with a power-type release rate.

\renewcommand{\customname}{Power-type release rate}
\begin{custom}  Assume that $r(u)=u^\beta$ for $\beta\neq0,1$. 	According to Theorem \ref{prop:transience}, $\process{X}$ is transient if  $$\liminf_{u\to\infty}u^{1-\beta}\int_0^\infty\frac{\bar\upnu(uv)}{1+v^2}\D v>1.$$
	 If $\varphi(t)=ct$ for some $c>0$, then  $\Phi(t)=(\ln t)/c$, $$\bar{\V}(u)=\E^{\frac{c}{1-\beta}(u^{1-\beta}-1)+c},$$ and \eqref{tm:TV} becomes $$\limsup_{u\to\infty}\frac{\int_0^\infty \frac{\E^{\frac{c}{1-\beta}(u+v)^{1-\beta}}}{(u+v)^\beta}\bar\upnu(v)\D v}{\E^{\frac{c}{1-\beta}u^{1-\beta}}}<1.$$
	If $\beta>1$, since 
	$$\limsup_{u\to\infty}\frac{\int_0^\infty \frac{\E^{\frac{c}{1-\beta}(u+v)^{1-\beta}}}{(u+v)^\beta}\bar\upnu(v)\D v}{\E^{\frac{c}{1-\beta}u^{1-\beta}}}\le \frac{2}{\beta-1}\limsup_{u\to\infty}\int_{[0,\infty)}\left(u^{1-\beta}-(u+v)^{1-\beta}\right)\upnu(\D v)=0,$$
	it follows that $\process{X}$ is geometrically  ergodic with rate $\E^{ct}$. Furthermore, according to Theorem \ref{prop:upper_bounds_super_linear}, if $$\lim_{u\to\infty}\int_0^\infty\frac{\bar\upnu(v)}{(u+v)^\beta}\D v=0,$$ $\process{X}$ is uniformly ergodic. If $0<\beta<1$ and $\int_{[1,\infty)}\E^{\frac{c}{1-\beta}u^{1-\beta}}\upnu(\D u)<\infty$, then 
	$$\limsup_{u\to\infty}\frac{\int_0^\infty \frac{\E^{\frac{c}{1-\beta}(u+v)^{1-\beta}}}{(u+v)^\beta}\bar\upnu(v)\D v}{\E^{\frac{c}{1-\beta}u^{1-\beta}}}\le \limsup_{u\to\infty}\int_0^\infty \frac{\E^{\frac{c}{1-\beta}v^{1-\beta}}}{(u+v)^\beta}\bar\upnu(v)\D v=0.$$ Hence, $\process{X}$ is again geometrically  ergodic with rate $\E^{ct}$.  On the other hand,  if $\beta<0$, we have
	\begin{align*}&\limsup_{u\to\infty}\frac{\int_0^\infty \frac{\E^{\frac{c}{1-\beta}(u+v)^{1-\beta}}}{(u+v)^\beta}\bar\upnu(v)\D v}{\E^{\frac{c}{1-\beta}u^{1-\beta}}}\\
	&\ge \limsup_{u\to\infty}\int_0^\infty \frac{\E^{\frac{c}{1-\beta}\left((u+v)^{1-\beta}-u^{1-\beta}\right)}}{(u+v)^\beta}\bar\upnu(v)\D v\\
	&\ge \limsup_{u\to\infty}\int_0^\infty \frac{\E^{cvu^{-\beta}}}{(u+v)^\beta}\bar\upnu(v)\D v\\
	&=\infty.\end{align*} Hence, we cannot conclude the geometric ergodicity of $\process{X}$ (if it holds at all) from Theorem \ref{tm:TV}.

	 Let now 
	$\varphi(t)=t^\alpha$ for some $\alpha\in(0,1)$. Then,  $\Phi(t)=(t^{1-\alpha}-1)/(1-\alpha)$ and  $$\bar{\V}(u)=\left(\frac{1-\alpha}{1-\beta}u^{1-\beta}-\frac{1-\alpha}{1-\beta}+2-\alpha\right)^{1/(1-\alpha)}.$$
For $\beta>1$,	we have \begin{align*} &\limsup_{u\to\infty} \frac{\int_0^\infty\frac{\left(\frac{1-\alpha}{1-\beta}(u+v)^{1-\beta}-\frac{1-\alpha}{1-\beta}+2-\alpha\right)^{\alpha/(1-\alpha)}}{(u+v)^\beta}\bar\upnu(v)\D v}{\left(\frac{1-\alpha}{1-\beta}u^{1-\beta}-\frac{1-\alpha}{1-\beta}+2-\alpha\right)^{\alpha/(1-\alpha)}}\\&\le \frac{\left(\frac{1-\alpha}{\beta-1}+2-\alpha\right)^{\alpha/(1-\alpha)}}{(\beta-1)(2-\alpha)^{\alpha/(1-\alpha)}}\limsup_{u\to\infty}\int_{[0,\infty)}\left(u^{1-\beta}-(u+v)^{1-\beta}\right)\upnu(\D v)=0.\end{align*} Hence, $\process{X}$ is subgeometrically  ergodic with rate $t^{\alpha/(1-\alpha)}$. Assume next that 
	 $\alpha\le\beta<1$ and $\int_{[1,\infty)}u\upnu(\D u)<\infty$, then
	\begin{align*}&\limsup_{u\to\infty} \frac{\int_0^\infty\frac{\left(\frac{1-\alpha}{1-\beta}(u+v)^{1-\beta}-\frac{1-\alpha}{1-\beta}+2-\alpha\right)^{\alpha/(1-\alpha)}}{(u+v)^\beta}\bar\upnu(v)\D v}{\left(\frac{1-\alpha}{1-\beta}u^{1-\beta}-\frac{1-\alpha}{1-\beta}+2-\alpha\right)^{\alpha/(1-\alpha)}}\\
	&\le 4\limsup_{u\to\infty} \frac{\int_0^\infty(u+v)^{\frac{\alpha(1-\beta)}{1-\alpha}-\beta}\bar\upnu(v)\D v}{u^{\frac{\alpha(1-\beta)}{1-\alpha}}}\\
	&\le \frac{4(1-\beta)}{1-\alpha}\limsup_{u\to\infty} \frac{\int_0^\infty \left((u+v)^{\frac{1-\beta}{1-\alpha}}-u^{\frac{1-\beta}{1-\alpha}}\right)\upnu(\D v)}{u^{\frac{\alpha(1-\beta)}{1-\alpha}}}\\
	&\le   \frac{4(1-\beta)^2}{(1-\alpha)^2}\limsup_{u\to\infty}  u^{\frac{\alpha\beta-\beta}{1-\alpha}}\int_0^\infty v\upnu(\D v)\\&=0. \end{align*} Hence,  in this case $\process{X}$ is again subgeometrically ergodic
	with rate $t^{\alpha/(1-\alpha)}$.
Finally, if $\beta<\alpha$ and $\int_{[1,\infty)}u^{(1-\beta)/(1-\alpha)}\upnu(\D u)<\infty$, then
	\begin{align*}&\limsup_{u\to\infty} \frac{\int_0^\infty\frac{\left(\frac{1-\alpha}{1-\beta}(u+v)^{1-\beta}-\frac{1-\alpha}{1-\beta}+2-\alpha\right)^{\alpha/(1-\alpha)}}{(u+v)^\beta}\bar\upnu(v)\D v}{\left(\frac{1-\alpha}{1-\beta}u^{1-\beta}-\frac{1-\alpha}{1-\beta}+2-\alpha\right)^{\alpha/(1-\alpha)}}\\
	&\le\frac{4(1-\beta)}{1-\alpha}\limsup_{u\to\infty} \frac{\int_0^\infty \left((u+v)^{\frac{1-\beta}{1-\alpha}}-u^{\frac{1-\beta}{1-\alpha}}\right)\upnu(\D v)}{u^{\frac{\alpha(1-\beta)}{1-\alpha}}}\\
	&\le \frac{4(1-\beta)^2}{(1-\alpha)^2}\limsup_{u\to\infty}  \frac{\int_0^\infty v(u+v)^{\frac{\alpha-\beta}{1-\alpha}}\upnu(\D v)}{u^{\frac{\alpha(1-\beta)}{1-\alpha}}}\\
	&=0,\end{align*}
	which implies subgeometrically ergodicity of $\process{X}$ 
	with rate $t^{\alpha/(1-\alpha)}$.    \qed
\end{custom}

We now consider the storage model with a ``general'' release rate. Related results can be found in \cite[Section 9]{Meyn-Tweedie-AdvAP-III-1993} and \cite[Theorem 4]{Tuominen-Tweedie-1979}. 
\renewcommand{\customname}{``General'' release rate}
\begin{custom}  Assume that $\liminf_{u\to\infty}r(u)>a$ for $a>0$.  Hence, there exists $u_0\ge0$ such that $r(u)\ge a$ for $u\ge u_0$.
If $\varphi(t)=ct$ for some $c>0$, then  $\Phi(t)=(\ln t)/c$, and  
	$$\limsup_{u\to\infty}\frac{\int_0^\infty\frac{\varphi(\V(u+v))}{r(u+v)}\bar\upnu(v)\D v}{\varphi(\V(u))}\le \limsup_{u\to\infty}\frac{\int_0^\infty\E^{c\int_u^{u+v}\frac{\D w}{r(w)}}\bar\upnu(v)\D v}{a}\le \frac{\int_0^\infty\E^{\frac{c}{a}v}\bar\upnu(v)\D v}{a}.$$ Hence, if $\int_{[1,\infty)}\E^{\frac{c}{a}u}\upnu(\D u)<\infty,$ and $$\int_0^\infty\E^{\frac{c}{a}u}\bar\upnu(v)\D u=\frac{a}{c}\int_{[0,\infty)}\left(\E^{\frac{c}{a}u}-1\right)\upnu(\D u)<a,$$ then  
	$\process{X}$ is geometrically ergodic with rate $\E^{ct}$.

Let now $\varphi(t)=t^\alpha$ for some $\alpha\in(0,1)$. Then,  $\Phi(t)=(t^{1-\alpha}-1)/(1-\alpha)$. If $\int_{[1,\infty)}u^{1/(1-\alpha)}\upnu(\D u)<\infty$, we have
\begin{align*}&\limsup_{u\to\infty}\frac{\int_0^\infty\frac{\varphi(\V(u+v))}{r(u+v)}\bar\upnu(v)\D v}{\varphi(\V(u))}\\
&\le\frac{1}{a} \limsup_{u\to\infty}\frac{\int_0^\infty\left(\int_1^{u+v}\frac{\D w}{r(w)}+\frac{2-\alpha}{1-\alpha}\right)^{\alpha/(1-\alpha)}\bar\upnu(v)\D v}{\left(\int_1^{u}\frac{\D w}{r(w)}+\frac{2-\alpha}{1-\alpha}\right)^{\alpha/(1-\alpha)}}\\
&\le \frac{1}{a} \limsup_{u\to\infty}\int_0^\infty\left(1+ \frac{\int_u^{u+v}\frac{\D w}{r(w)}}{\int_1^{u}\frac{\D w}{r(w)}+\frac{2-\alpha}{1-\alpha}}\right)^{\alpha/(1-\alpha)}\bar\upnu(v)\D v\\
&\le\frac{1}{a} \int_0^\infty\left(1+ \frac{v}{a\liminf_{u\to\infty}\int_1^{u}\frac{\D w}{r(w)}+\frac{(2-\alpha)a}{1-\alpha}}\right)^{\alpha/(1-\alpha)}\bar\upnu(v)\D v.\end{align*}
Hence, if $\liminf_{u\to\infty}\int_1^{u}\frac{\D w}{r(w)}=\infty,$
then 
	$\process{X}$ will be subgeometrically  ergodic with rate $t^{\alpha/(1-\alpha)}$ if
	$m_\upnu<a.$ On the other hand, 
	if $\liminf_{u\to\infty}\int_1^{u}\frac{\D w}{r(w)}=b$, then $\process{X}$ will be subgeometrically  ergodic with rate $t^{\alpha/(1-\alpha)}$ if
	\begin{align*}\pushQED{\qed} &\frac{1}{a} \int_0^\infty\left(1+ \frac{v}{ab+\frac{(2-\alpha)a}{1-\alpha}}\right)^{\alpha/(1-\alpha)}\bar\upnu(v)\D v\\
	&=(b(1-\alpha)+2-\alpha)\int_0^\infty\left(\left(1+\frac{u}{ab+\frac{(2-\alpha)a}{1-\alpha}}\right)^{1/(1-\alpha)}-1\right)\upnu(\D u)\\
	&<1.\qedhere
	\popQED\end{align*}
	\end{custom}

Finally, we consider several storage models for which we obtain a sharp quantitative characterization of the ergodic convergence rates by matching upper and lower bounds. Recall that for $\alpha \in \R\setminus\{0\}$ and a function $f:(0,\infty)\to(0,\infty)$, we write $f(u)\approx u^\alpha$ if  for every $\epsilon \in (0,|\alpha|)$, 
$$u^{\alpha-\epsilon}\preceq f(u)\preceq u^{\alpha+\epsilon}.$$

\renewcommand{\customname}{Sharp ergodic convergence rates}
\begin{custom} 
	\begin{itemize}
		\item[(i)] Assume $r(u) \approx u^\beta$ and $\bar\upnu(u) \approx u^{-\alpha}$ for some $\alpha,\beta \in (0,\infty)$. According to Theorem~\ref{prop:transience}, the process $\process{X}$ is transient if $\alpha + \beta < 1$.
		If $\alpha + \beta > 1$ and $\beta < 1$, then by taking $\varphi(t) \approx t^{\frac{\alpha+\beta-1}{\alpha}}$ and $h(u) \approx u^\alpha$, it follows from Theorem~\ref{tm:TV} that  
		$$
		\bar \uppi(u)\approx u^{1-\alpha-\beta} \qquad\text{and}\qquad \|p(t,x,\D y)-\uppi(\D y)\|_{\mathrm{TV}} \approx t^{\frac{1-\alpha-\beta}{1-\beta}}.
		$$
		Finally, if $\beta > 1$, Theorem~\ref{prop:upper_bounds_super_linear} yields uniform ergodicity of $\process{X}$, and the invariant measure satisfies $$\limsup_{u\to\infty}u^{\alpha+\beta-1-\epsilon}\bar\uppi(u) <\infty$$ for all $\epsilon>0$.
		
		\item[(ii)] Assume $\bar\upnu(u) \approx u^{-\alpha}$ for some $\alpha \in (1,\infty)$ and that $m_\upnu<\lim_{u\to\infty} r(u) <\infty$. By taking $\varphi(t) \approx t^{\frac{\alpha-1}{\alpha}}$ and $h(u) \approx u^\alpha$, it follows from Theorem~\ref{tm:TV} that
		\begin{equation*}\pushQED{\qed}
		\bar \uppi(u)\approx u^{1-\alpha} \qquad\text{and}\qquad \|p(t,x,\D y)-\uppi(\D y)\|_{\mathrm{TV}} \approx t^{1-\alpha}. \qedhere  \popQED
		\end{equation*}
	\end{itemize}
\end{custom}

\subsection{Ergodicity with respect to Wasserstein distances}\label{s3}

In this subsection, we prove Theorem \ref{tm:WASS1}. First, we establish an auxiliary result that will be used in its proof.

 \begin{lemma}\label{lm:WASS1} Let $\Delta,T>0$, and  let $f:[0,T)\to[0,\infty)$ satisfy the following  
 	conditions:
 	\begin{itemize}
 		\item [(i)] $f(t)$ is  absolutely continuous on any interval $[t_0,t_1]$ with $0<t_0<t_1<T$;
 		\item[(ii)] $f'(t)\leq -\Delta\beta(f(t))$ a.e. on $[0,T)$;
 		\item[(iii)] $\beta(f(t))>0$ a.e. on $[0,T)$.
 	\end{itemize}  
 	Then, for any $\kappa\ge f(0)$, we have $$f(t)\leq B_{\kappa}^{-1}(\Delta t),\qquad 0\le t< T.$$
 \end{lemma}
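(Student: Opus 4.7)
The plan is to recognise the differential inequality in (ii) as a bound on the derivative of $B_\kappa\circ f$, integrate it on an interior subinterval, and then invert $B_\kappa$ to obtain the claimed bound. This is a standard Osgood/Bihari-type comparison, but some care is needed because $f$ is only assumed absolutely continuous on compact subintervals of the \emph{open} interval $(0,T)$.

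First I would establish the necessary structural properties of $f$. From (ii) and (iii) we have $f'<0$ almost everywhere on $[0,T)$, so, being continuous on $(0,T)$ by (i), $f$ is strictly decreasing there; moreover if $f(s^{*})=0$ for some $s^{*}\in(0,T)$, monotonicity forces $f\equiv0$ on $[s^{*},T)$, which contradicts (iii). Hence $f$ takes values in $(0,f(0)]\subseteq(0,\kappa]$ on $(0,T)$, so in particular $B_\kappa(f(s))$ is well-defined there. Next, for fixed $0<t_0<t<T$ the range of $f$ on $[t_0,t]$ lies in the compact set $[f(t),f(t_0)]\subset(0,\kappa]$, on which $B_\kappa$ is continuously differentiable with $B_\kappa'(u)=-1/\beta(u)$. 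Since $f$ is AC on $[t_0,t]$ with this bounded range, the composition $s\mapsto B_\kappa(f(s))$ is AC on $[t_0,t]$ and
\begin{equation*}
\frac{\D}{\D s}B_\kappa(f(s))\;=\;-\frac{f'(s)}{\beta(f(s))}\;\ge\;\Delta\qquad\text{a.e.\ on }[t_0,t]
\end{equation*}
by (ii) and (iii). Integrating and then sending $t_0\downto0$, using continuity of $B_\kappa$ on $(0,\kappa]$ (and monotone convergence in case $f(0^{+})=0$), yields
\begin{equation*}
B_\kappa(f(t))\;\ge\;\Delta t+B_\kappa(f(0^{+}))\;\ge\;\Delta t+B_\kappa(\kappa)\;=\;\Delta t,
\end{equation*}
where the second inequality uses $f(0^{+})\le f(0)\le\kappa$ together with $B_\kappa$ being non-increasing and $B_\kappa(\kappa)=0$. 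Since $B_\kappa$ is strictly decreasing on $(0,\kappa]$, this inverts to $f(t)\le B_\kappa^{-1}(\Delta t)$, which is the desired bound.

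The main obstacle I anticipate is the chain-rule/change-of-variables step together with the passage $t_0\downto0$. One has to verify carefully that $f$ stays bounded away from $0$ on every compact subinterval of $(0,T)$, so that $1/\beta\circ f$ is bounded there and the composition $B_\kappa\circ f$ is genuinely absolutely continuous (rather than merely differentiable a.e.). A small additional edge case is when $B_\kappa(0^{+})<\infty$ and $\Delta t\ge B_\kappa(0^{+})$: then $B_\kappa^{-1}(\Delta t)$ is interpreted as $0$, and one argues by contradiction, since $f(t)>0$ would give $B_\kappa(f(t))<B_\kappa(0^{+})\le\Delta t$, incompatible with the integrated inequality, forcing $f(t)=0$ as required.
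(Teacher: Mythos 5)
Your proof is correct and follows essentially the same Osgood/Bihari comparison as the paper: both integrate $f'(s)/\beta(f(s))\le-\Delta$ (equivalently, bound the derivative of $B\circ f$ below by $\Delta$) to get $B(f(t))\ge\Delta t$ and then invert, the only cosmetic difference being that the paper works with $B_{f(0)}$, uses convexity of $\beta$ to get $B_{f(0)}(0)=\infty$, and then compares $B_{f(0)}\le B_\kappa$, while you work with $B_\kappa$ directly and treat the (here vacuous) case $B_\kappa(0^+)<\infty$ by hand. The one step you assert without justification, $f(0^+)\le f(0)$, is not forced by (i)--(iii) as literally stated, but the paper's own change of variables over $[0,t]$ makes the same tacit use of continuity at $0$, which holds for the function the lemma is applied to, so this is not a gap relative to the paper's argument.
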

 \begin{proof}
 	By assumption, for any $t\in[0,T)$, $$-B_{f(0)}(f(t))=\int_{f(0)}^{f(t)}\frac{\D s}{\beta(s)}=\int_0^t\frac{f'(s)\,\D s}{\beta(f(s))}\le-\Delta t.$$ 
 	Hence, $$f(t)\leq B_{f(0)}^{-1}(\Delta t),\qquad 0\le t<\Delta^{-1}B_{f(0)}(0)\wedge T.$$
 	Now, the  assertion follows follows from the fact that 
 	$B_{f(0)}(t)\le B_\kappa(t)$ for all $t\in(0,f(0)]$, and from the relation $$\beta(t)=\beta(t+(1-t)0)\le t\beta(1)+(1-t)\beta(0)=t\beta(1),\qquad t\in[0,1],$$ which implies $B_{f(0)}(0)=\infty$.
 \end{proof}

We now prove Theorem \ref{tm:WASS1}.

\begin{proof}[Proof of Theorem \ref{tm:WASS1}] 
	Let us first remark that, according to \cite[Lemma 2.3]{Sandric-Arapostathis-Pang-2022}, we have $p(t,x,\D y)\in\mathcal{P}_p$ for all $x\ge0$ and $t>0$. Next, fix $x,y\ge0$, with $x\neq y$, and let $\process{X}$ and $\process{Y}$ be solutions to \eqref{eq1} starting from $x$ and $y$, respectively. Define $\tau:=\inf\{t>0:X(t)=Y(t)\}$ and 
	\begin{equation*}Z(t):=\left\{\begin{array}{cc}
	Y(t), & t<\tau, \\
	X(t),& t\ge \tau,
	\end{array}\right.\qquad t\ge0. \end{equation*}	By employing the strong Markov property, it is easy to see that $\mathbb{P}^y(Z(t)\in\cdot)=\mathbb{P}^y(Y(t)\in\cdot)$ for all $t\ge0$. Consequently,
	\begin{equation*}\W_{p}(p(t,x,\D z),p(t,y,\D z))\le(\mathbb{E}(|X(t)-Z(t)|^p))^{1/p},\qquad t\ge0.\end{equation*}
	Since the mapping $t\mapsto |X(t)-Z(t)|$ is  absolutely continuous on $[0,\tau)$,
	we have  $$\frac{\D}{\D t}|X(t)-Z(t)|=\frac{(X(t)-Z(t))(r(Z(t))-r(X(t)))}{|X(t)-Z(t)|},$$ a.e. on $[0,\tau).$
	By assumption, we have
	$$\frac{\D}{\D t}|X(t)-Z(t)|\le-\Gamma\beta(|X(t)-Z(t)|),$$ a.e. on $[0,\tau),$
	which,	together with Lemma \ref{lm:WASS1}, gives $$|X(t)-Z(t)|\leq B_\kappa^{-1}(\Gamma t)\qquad t\geq 0,\quad \kappa\ge |x-y|.$$  For $t\ge\tau$, the term on the left-hand side vanishes, and the term on the right-hand side is well defined and strictly positive ($\beta(t)$ is  convex and $\beta(t)=0$ if, and only if, $t=0$). Now, by taking the expectation and infimum, we conclude $$\W_{p}(p(t,x,\D z),p(t,y,\D z))\leq B_{\kappa}^{-1}(\Gamma t),\qquad t\geq 0,\quad \kappa\ge|x-y|.$$ 
	Next, let  $0<\kappa<|x-y|$. 
	Define  $z_0,\dots,z_{\lceil\delta|x-y|\rceil}\ge0$, where $z_0=x$ and  $$z_{i+1}=z_i+\frac{y-x}{\lceil|x-y|/\kappa\rceil},\qquad i=0,\dots,\lceil|x-y|/\kappa\rceil-1.$$ By construction,
	$|z_0-z_{1}|=\dots=|z_{\lceil|x-y|/\kappa\rceil-1}-z_{\lceil|x-y|/\kappa\rceil}|)\le \kappa$.
	Thus, we conclude that for any $x,y\ge0$ such that $|x-y|> \kappa$, \begin{align*}
	&\W_{p}(p(t,x,\D z),p(t,y,\D z))\\&\leq \W_{p}(p(t,z_0,\D z),p(t,z_1,\D z))+\cdots+\W_{p}(p(t,z_{\lceil|x-y|/\kappa\rceil-1},\D z),p(t,z_{\lceil|x-y|/\kappa\rceil},\D z)).\end{align*} By the previous bound, this implies 
	$$\W_{p}(p(t,x,\D z),p(t,y,\D z))\le \lceil|x-y|/\kappa\rceil B_\kappa^{-1}(\Gamma t)\le \left( \frac{|x-y| }{\kappa}+1\right) B_\kappa^{-1}(\Gamma t),\qquad t\geq0.	$$ 
From this, and using \cite[Lemma 2.3]{Sandric-Arapostathis-Pang-2022} (which implies that for any measure $\upmu\in\mathcal{P}_p$ it holds that $\int_{[0,\infty)}p(t,x,\D y)\upmu(\D x)\in\mathcal{P}_p$), we conclude that for any $\upmu,\upeta\in\mathcal{P}_p$, 
$$\W_p\left(\int_{[0,\infty)}p(t,x,\D y)\upmu(\D x),\int_{[0,\infty)}p(t,x,\D y)\upeta(\D x)\right)\le \left(\frac{\W_p(\upmu,\upeta)}{\kappa}+1\right)B_\kappa^{-1}(\Gamma t),\qquad t\geq 0.$$

We next show $\process{X}$ admits an invariant $\uppi\in\mathcal{P}.$
According to \cite[Proposition 4.3]{Albeverio-Brzezniak-Wu-2010} and \cite[Theorem 3.1]{Meyn-Tweedie-AdvAP-II-1993}, the existence of $\uppi$ will follow if
we show that for each $x\ge0$ and $0<\varepsilon<1$, there is a compact set $C\subset[0,\infty)$ (possibly depending on $x$ and $\varepsilon$) such that $$\liminf_{t\to\infty}\frac{1}{t}\int_0^tp(s,x,C)\D s\ge 1-\varepsilon.$$
By taking $u=0$ in \eqref{eqWASS1}, we have  $$r(v)\ge\Gamma\beta(v),\qquad v\ge0.$$ 
Also, due to the convexity of the function $u\mapsto u^p$ on $[0,\infty)$, we have $$(u+v)^p-u^p\le pv(u+v)^{p-1}\le p(1\vee 2^{p-2})(vu^{p-1}+v^p),\qquad u,v\ge0.$$
Thus, for $\V(u)=u^p$, we have  \begin{align*}\mathcal{L}\V(u)&=-pu^{p-1}r(u)+\int_{[0,\infty)}((u+v)^p-u^p)\upnu(\D v)\\
&\le -p\Gamma u^{p-1}\beta(u)+p(1\vee 2^{p-2}) u^{p-1}\int_{[0,\infty)}v\upnu(\D v)+p(1\vee 2^{p-2}) \int_{[0,\infty)}v^p\upnu(\D v), \qquad u\ge0.\end{align*}
Now, since every super-additive convex function is necessarily non-decreasing and unbounded,
we conclude that there is $u_0>0$
large enough such that 
$$p(1\vee 2^{p-2}) u^{p-1}\int_{[0,\infty)}v\upnu(\D v)+p(1\vee 2^{p-2}) \int_{[0,\infty)}v^p\upnu(\D v)\le \frac{1}{2}p\Gamma u^{p-1}\beta(u),\qquad u\ge u_0,$$ i.e.,
\begin{align*}
&\mathcal{L}\V(u)\\
&\le \left(-p\Gamma u^{p-1}\beta(u)+p(1\vee 2^{p-2}) u^{p-1}\int_{[0,\infty)}v\upnu(\D v)+p(1\vee 2^{p-2}) \int_{[0,\infty)}v^p\upnu(\D v)\right)\Ind_{[0,u_0]}(u)\\
&\ \ \ \ -\frac{1}{2}p\Gamma u^{p-1}\beta(u)\Ind_{[u_0,\infty)}(u)\\
&\le \left(\frac{1}{2}p\Gamma u_=^{p-1}\beta(u_0)+p(1\vee 2^{p-2}) u_0^{p-1}\int_{[0,\infty)}v\upnu(\D v)+p(1\vee 2^{p-2}) \int_{[0,\infty)}v^p\upnu(\D v)\right)\Ind_{[0,u_0]}(u)\\
&\ \ \ \ -\frac{1}{2}p\Gamma u_0^{p-1}\beta(u_0),\qquad u\ge0.
\end{align*}
Clearly, the above relation holds for all $u_1\ge u_0$. Now, according to 
\cite[Theorem 1.1]{Meyn-Tweedie-AdvAP-III-1993}, we conclude that for each $x\ge0$ and $u\ge u_0$, we have \begin{align*}&\liminf_{t\to\infty}\frac{1}{t}\int_0^tp(s,x,[0,u])\D s\\&\ge\frac{\frac{1}{2}p\Gamma u^{p-1}\beta(u)}{\frac{1}{2}p\Gamma u^{p-1}\beta(u)+p(1\vee 2^{p-2}) u^{p-1}\int_{[0,\infty)}v\upnu(\D v)+p(1\vee 2^{p-2}) \int_{[0,\infty)}v^p\upnu(\D v)}.\end{align*}
The assertion now follows by choosing  $u$ large enough.	

Let us now show that any invariant $\uppi\in\mathcal{P}$ of $\process{X}$ satisfies $\uppi\in\mathcal{P}_p$. 
For $\V(u)=u^p$, we have shown that there exist constants $\Gamma_1,\Gamma_2,u_0>0$ such that
  \begin{equation}\label{eq:lyp}\mathcal{L}\V(u)\le \Gamma_{1}\Ind_{[0,u_0]}(u)-\Gamma_{2}u^{p-1}\beta(u),\qquad u\ge0.\end{equation} Now, from \cite[Theorem 4.3]{Meyn-Tweedie-AdvAP-III-1993}, it follows that  $$\int_{[0,\infty)}u^{p-1}\beta(u)\uppi(\D u)\le\frac{\Gamma_{1}}{\Gamma_{2}}$$ for any  invariant  $\uppi\in\mathcal{P}$. The assertion now follows from the properties of $\beta(u)$. Namely, from its convexity and the fact that $\beta(0)=0$, we have $$\beta(ut)\le t\beta(u),\qquad t\in[0,1],\quad u\ge0.$$ In particular, we obtain \begin{equation}\label{eq:con}\beta(1) u\le\beta(u),\qquad u\ge1.\end{equation} Furthermore, by the Markov inequality, we have $$\bar\uppi(u)\le \frac{\int_{[0,\infty)}u^p\uppi(\D u)}{u^p}.$$

Finally, let us prove that $\process{X}$ admits a unique invariant probability measure that satisfies \eqref{eqWASS2}. Let $\uppi,\bar{\uppi}\in\mathcal{P}_p$	be two invariant probability measures of $\process{X}$. Then,  for any $\kappa>0$, we have  \begin{align*}\W_p(\uppi,\bar{\uppi})&=
\W_p\left(\int_{[0,\infty)}p(t,x,\D y)\uppi(\D x),\int_{[0,\infty)}p(t,x,\D y)\bar{\uppi}(\D x)\right)\\&\le \left(\frac{\W_p(\uppi,\bar{\uppi})}{\kappa}+1\right)B_\kappa^{-1}(\Gamma t),\qquad t\ge0.\end{align*} The uniqueness now follows by letting $t\to\infty$.
 Finally, for any $\kappa>0$ and $\upmu\in\mathcal{P}_p$,  we have 
\begin{align*}\W_p\left(\uppi(\D y),\int_{[0,\infty)}p(t,x,\D y)\upmu(\D x)\right)&=
\W_p\left(\int_{[0,\infty)}p(t,x,\D y)\uppi(\D x),\int_{[0,\infty)}p(t,x,\D y)\bar{\upmu}(\D x)\right)\\&
\le\left(\frac{\W_p(\uppi,\upmu)}{\kappa}+1\right)B_\kappa^{-1}(\Gamma t),\qquad t\ge 0,\end{align*} which concludes the proof.	
\end{proof}

A typical example that satisfies the conditions of Theorem \ref{tm:WASS1} is as follows. Let $a\in\R$, $b>0$ and $p,d\ge1$ be arbitrary, and let $r(u)=a-bu^d$, $\beta(t)=t^d$, and let $\upnu$ be any measure on $[0,\infty)$ satisfying $$\int_{[0,\infty)}(u\vee u^p)\upnu(\D u).$$ It is now easy to see that
$r(u)$  satisfies the relation in \eqref{eqWASS1} with $\Gamma=b$. Additionally, if $d=1$, then $B_\kappa^{-1}(t)=\kappa\E^{-t}$, and if $d>1$, then $$\lim_{t\to\infty}\sqrt[d-1]{t}B_\kappa^{-1}(t)=\frac{1}{\sqrt[d-1]{d-1}}.$$
Note also that Theorem \ref{tm:WASS1} covers the case when $A(t)\equiv0$, i.e., a ``zero-input'' storage model. Clearly, since the model is deterministic in this case, $\uppi=\updelta_0$ (the Dirac measure at $0$) and the convergence of $p(t,x,\D y)$ to $\uppi(\D y)$ cannot hold in the total variation distance.

Finally, observe that for $\V(u)=u^p$, from \eqref{eq:lyp} and \eqref{eq:con}, we have $$\mathcal{L}\V(u)\le \Gamma_{1}\Ind_{[0,u_0]}(u)-\Gamma_{2}u^{p},\qquad u\ge0,$$
		for some $u_0,\Gamma_1,\Gamma_2>0.$ Therefore, if additionally $\process{X}$ is $\uppsi$-irreducible, aperiodic and every compact set is petite for $\process{X}$, it follows from \cite[Theorem 5.2]{Down-Meyn-Tweedie-1995}  that $\process{X}$  is geometrically ergodic with respect to the total variation distance. Furthermore, from \cite[Theorem  1.1]{Sandric-Arapostathis-Pang-2022}, it is geometrically ergodic with respect to $\W_1$, and subgeometrically ergodic with rate $t^{1-p/q}$ with respect to $\W_q$ for $q\in[1,p]$.

\section*{Acknowledgement} 
\noindent
MB was supported by EPSRC grant EP/V009478/1 and  CUHK-SZ start-up  UDF01004230. AM was  supported by EPSRC grants EP/V009478/1 and EP/W006227/1 and. NS was supported by \textit{Croatian Science Foundation} project 2277.

\bibliographystyle{alpha}
\bibliography{References}

\end{document}